\DeclareSymbolFontAlphabet{\mathrsfs}{rsfs}
\newtheorem{thm}{Theorem}[section]
\newtheorem{lem}[thm]{Lemma}
\newtheorem{prop}[thm]{Proposition}
\theoremstyle{definition}
\theoremstyle{remark}
\newtheorem{rem}[thm]{Remark}
\numberwithin{equation}{section}
\def\cT{{\mathcal T}}
\def\cX{{\mathcal X}}
\def\cY{{\mathcal Y}}
\def\cB{{\mathcal B}}
\def\al{\alpha}
\def\ep{\epsilon}
\def\de{\delta}
\def\ga{\gamma}
\begin{document}
\title[computations of geometric ergodicity]{Numerical computations of
  geometric ergodicity for stochastic dynamics}

\author{Yao Li}
\address{Yao Li: Department of Mathematics and Statistics, University
  of Massachusetts Amherst, Amherst, MA, 01002, USA}
\email{yaoli@math.umass.edu}
\author{Shirou Wang}
\address{Shirou Wang: Department of Mathematical and Statistical Sciences, University of Alberta, Edmonton, Alberta, Canada T6G2G1}
\email{shirou@ualberta.ca}

\keywords{Stochastic process, stochastic differential equations, geometric ergodicity, coupling method.}

\begin{abstract}
A probabilistic approach to compute the geometric convergence rate of a stochastic process is introduced in this paper. The goal is to quantitatively compute both the upper and lower bounds for rate of the exponential convergence  to the  stationary distribution of a stochastic dynamical system.
By applying the coupling method,
we derive an algorithm  
which does not rely on the discretization of  the infinitesimal generator. In this way,  our approach works well for many high-dimensional examples. 
We apply this algorithm to the random perturbations of both iterative maps and  differential equations.
We show that 
the rate of geometric ergodicity of a random perturbed system can, to some extent, reveal the degree of chaoticity of the underlying  deterministic dynamics.
Various SDE models  including the ones  with degenerate noise or living on the  high-dimensional state space are also explored. 
\end{abstract}
\maketitle

\section{Introduction}
In this paper, we consider the stochastic processes arising from the random perturbations of deterministic dynamical systems.  The dynamics of such a stochastic process, say ${\bm X} = \{X_{t}\}$, is a combination of a random diffusion and a  deterministic dynamics.  The rate of the ergodicity of ${\bm X},$ i.e., the speed of convergence of the law of $X_{t}$ to the invariant distribution, is a significant quantity  closely related to the spectral gap of the infinitesimal generator of ${\bm X}$, especially when ${\bm X}$
is reversible. From an applied viewpoint, knowing the speed of convergence is very useful to the sampling, uncertainty
quantification, and sensitivity analysis \cite{dobson2019using, johndrow2017error,
  mitrophanov2005sensitivity}.

However, the ergodicity of a stochastic process is difficult to study in a quantitative way. Methods  based on functional inequalities only work for a limited class of problems such as the over-damped Langevin dynamics \cite{bakry1985diffusions,
  holley1986logarithmic, lelievre2016partial}. The probabilistic approach, on the other hand,
although being ``softer'' and more applicable, usually does not give a precise bound in most of the existing results.
For instance, by constructing a Lyapunov function and establishing the minorization condition for a certain ``small set'', one can easily deduce the geometric ergodicity  \cite{hairer2010convergence, hairer2011yet, meyn2012markov}. Nevertheless, the rate
of geometric ergodicity obtained in this way is far from being optimal. In most cases, we only know that the exponential convergence rate to the steady
state is $\log\rho$ 
for some $\rho < 1$, 
but $\rho$  is usually too close to $1$ to be useful in practice.

The computational study of the ergodicity, on the other hand, is far
from being mature. 
While one can compute the
eigenvalues of the discretized infinitesimal generator for low-dimensional problems (1D or 2D)  as  discussed
in \cite{iacobucci2019convergence, pavliotis2013, risken, roussel2018spectral}, it  does not work well if ${\bm X}$
lives in a higher dimensional state space. 
One can obtain the convergence rate through the computation of the
correlation decay of a test function by the Monte Carlo simulation. However, as discussed in \cite{li2015stochastic}, the
correlation (or auto-correlation) has small expectation and large
variance, which results in an unrealistic requirement of large amount
of samples in the real simulations. In addition, the selection of test
functions is very subjective.  

The main goal of this paper is to propose a coupling approach, a powerful tool that has been used in many rigorous and computational studies \cite{bou2018coupling, eberle2019couplings, jacob2017unbiased, lindvall2002lectures,		lindvall1986coupling}, to numerically compute the geometric ergodicity.   
Traditionally,  the coupling method is mainly used in the theoretical study of stochastic dynamics. This is partially because in  computations,  a numerically simulated trajectory only approximate the real trajectory at discrete times with certain accuracy. As a result, on a continuous state
space, two numerical trajectories can easily ``miss'' each other even if the actual trajectories have already been coupled together. We solve this  by using the maximal coupling whenever two trajectories are sufficiently close and develop a corresponding numerical  scheme. By applying to various examples,
we show that our numerical coupling algorithm works well for the random perturbed iterative maps, the stochastic differential equations with non-degenerate diffusions, as well as the high-dimensional oscillators. Also, it can be well-adapted to certain systems with degenerate diffusions with some extra computational cost.  

A secondary goal of our study is to reveal how the geometric ergodicity of the perturbed stochastic system is related to the complexity of its underlying deterministic dynamics.  Applying to the random perturbed circle maps with distinct chaotic properties,  we show that the rate of geometric ergodicity, or heuristically the spectral property, can reveal, in some sense, the mixing property of the unperturbed  circle maps. For example, as the noise magnitude decreases, the rate of geometric ergodicity drops ``quickly" when the unperturbed  dynamics is ergodic but not mixing; while it drops ``dramatically" when the underlying dynamics admits a stable periodic orbit; see Section 4 for more details. Our simulation also shows that for the slow-fast systems,  a larger time scale separation between the slow and fast dynamics can enhance the geometric convergence rate when the random noises are added.  This can be explained by some heuristic arguments with numerical evidence.

The paper is organized as follows. Section 2 provides the necessary probability and dynamical system backgrounds.  Results serving as the theoretical basis of this paper are also presented and proved.   In section 3, various coupling mechanisms and our  numerical algorithms are described.  In Section 4,  by representative examples on the circle, we study the connection between the geometric ergodicity and the chaotic properties of the deterministic dynamics. In section 5, examples of stochastic differential equations with various deterministic or random structures are numerically studied. We conclude this paper in Section 6 with some further discussions and potential works.

\section{Preliminary}

\subsection{Markov process and geometric ergodicity} 
Throughout this paper, let $E$ be a state space, which can be   $\mathbb{R}^{k}, \mathbb{T}^{k}$, or a subset of $\mathbb{R}^{k},$ endowed with $\sigma$-field
$\mathcal{B}.$ 
Consider a Markov process {\boldmath $X$}$=\{X_{t};t\in\cT\}$ on $(E,
\mathcal{B}),$ where  $\cT$  can be 
$\mathbb{R}_{\ge0}$,$\mathbb{Z}_{\ge0}$, or $h \mathbb{Z}_{\ge0}:=\{0,h,2h,...\}$ for $h > 0.$ Let
 $\{P^{t}(x, A); x\in E, A\in\cB, t\in\cT\}$ be the transition probabilities of ${\bm X},$ i.e., for any $t\in\cT,$ $P^{t}(\cdot, A)$ is a measurable function for each fixed $A \in \mathcal{B}$,  and 
$P^{t}(x, \cdot)$ is a probability measure  for each fixed $x \in E$ such that
\[P^t(x,\cdot)=\int_E P^s(x,dy)P^{t-s}(y,\cdot),\quad 0\le s\le t.\]
In the following, for simplicity we denote the Markov process as  ${\bm X}=\{X_t\},$  the transition probabilities as $\{P^t\}$ when no ambiguity arises.

Given a Markov process ${\bm X}$ with  initial distribution  $\mu,$ for any $t\in\cT,$ $\mu P^t$ is the distribution of ${\bm X}$ at time $t$ such that 
\[
  \mu P^{t}(A) = \int_{E} P^{t}(x,A) \mu(
  \mathrm{d}x),\quad\forall A\in\cB.
\]
In particularly, $\mu$ is called {\it invariant} if $\mu P^t = \mu, \forall t\in\cT.$ 
 A Markov process {\boldmath $X$} is said to be {\it ergodic} if  it admits a unique
invariant (probability) distribution $\pi$ such that for any $x\in E, A\in\mathcal B,$ \[|P^{t}(x,A) -\pi(A)|\to0,\quad t\to\infty\] 
For a reference measure $\phi$ on $(E,\cB),$  {\boldmath $X$} is said to be {\it $\phi$-irreducible}  if given any $x \in E$, $\phi(A) >0$ implies that $P^{t}(x, A) > 0$ for some $t > 0$.
Throughout this paper, we assume that the Markov process {\boldmath $X$} 
is ergodic with an invariant (probability) distribution  $\pi.$ It is not hard to see that  {\boldmath$X$} is
$\pi$-irreducible.

The emphasis of this paper is the
geometric  ergodicity. An ergodic  Markov process  ${\bm X}$
is said to be {\it geometrically ergodic} with rate $r > 0$ if for $\pi$-a.e.
$x \in E,$
$$
\limsup_{t \rightarrow \infty}  \frac{1}{t} \log ( \|P^{t}(x,\cdot) -
\pi \|_{TV} ) = - r,
$$ 
where  $\|\mu-\nu\|_{TV}:=2\sup_{A\in\cB}|\mu(A)-\nu(A)|$ is the total variation distance between probability measures on $(E,\cB).$  A Markov process ${\bm X}$
is said to be {\it geometrically  contracting} with rate $r > 0$ if for $\pi\times\pi$-almost every initial pairs $(x,y)\in E\times E,$ 
it holds that 
\[
\limsup_{t \rightarrow \infty} \frac{1}{t} \log ( \| P^{t}(x,\cdot) -P^{t}(y,\cdot) \|_{TV}) = - r .\]

It is easy to see that the geometric ergodicity implies the geometric contraction. Since we already
assume the existence of an invariant probability measure, the
uniqueness of it directly follows
 from the geometrically contracting property. 
On the other hand, in the case of geometric contraction, one usually has estimate
 \[\|P^t(x,\cdot)-P^t(y,\cdot)\|_{TV}\le R(x,y) e^{-r t}\]
for a prefactor $R(x,y)$. It may happen that the prefactor  $R(x,\cdot)$
is too large to be integrable with respect to $\pi$, i.e., 
\[\int_E R(x,y)\pi(dy)=\infty,\] so that the
geometric convergence to the invariant measure $\pi$ may not be achieve at the same rate $r>0.$

\subsection{Coupling of Markov processes} 
In this paper, we investigate the geometric ergodicity of Markov processes through the coupling approach. This section serves as the theoretical background of it. We first  recall the  coupling of measures.  Let $\mu$ and $\nu$ be two probability measures on $(E,\cB)$. A
{\it coupling} of $\mu$ and $\nu$ is a probability measure on
$E \times E$ whose the  first and second
marginals are respective $\mu$ and $\nu.$ 
There is a well-known inequality showing that  the  total variation distance between $\mu$ and $\nu$ is bounded by  the difference of random variables realizing them.
To be specific, let $X$ and $Y$ be  random variables with  respective distributions $\mu$ and $\nu.$  Then (see, for instance, Lemma 3.6.  in \cite{aldous1983random})
\begin{eqnarray}\label{coup-ineq}
\|\mu - \nu \|_{TV} \leq 2\mathbb{P}[ X \neq Y].
\end{eqnarray}

Let ${\bm X}=\{X_{t};t\in\mathcal T\}$ and ${\bm Y}=\{Y_{t};t\in\mathcal T\}$
 be two stochastic  processes on $(E,\cB).$  A {\it coupling} of
{\boldmath $X$} and {\boldmath $Y$} is
a stochastic  process $({\bm X}, {\bm Y}) = \{(\mathcal{X}_{t}, \mathcal{Y}_{t}); t\in\mathcal T\}$ on
$E\times E$  such that
\begin{itemize}
	\item[(i)] The first and second marginal processes 
	$\{\mathcal{X}_t\}$ and $\{  \mathcal{Y}_{t}\}$ are respective copies of {\boldmath $X$} and {\boldmath $Y$};

	\item[(ii)] If $s\in\cT$ be such that  $\mathcal{X}_{s} = \mathcal{Y}_{s}$,  then $\mathcal{X}_{t} = \mathcal{Y}_{t}$ for all $t\ge s$.
\end{itemize}
The first meeting time of $\cX_t$ and $\cY_t,$ denoted as $\tau_{c}: = \inf_{t\ge0} \{\mathcal{X}_{t} = \mathcal{Y}_{t} \},$ is called the {\it coupling time}. 
A coupling $({\bm X}, {\bm Y})$ is said to be {\it successful} if the coupling time is almost surely finite, i.e., $\mathbb P[\tau_c<\infty]=1.$
Throughout this paper, we consider the couplings of two ergodic Markov processes, ${\bm X}$ and ${\bm Y},$ with a common  transition probabilities $\{P^t\}$ and a (unique) invariant (probability) distribution $\pi.$  
A coupling $({\bm X}, {\bm Y})$ is said to be a {\it Markov coupling}
if $({\bm X}, {\bm Y})$ is a Markov process. 
A Markov coupling  $({\bm X}, {\bm Y})$ is further called {\it irreducible} if it is $(\pi\times\pi)$-irreducible.

\begin{lem}\label{cplm}
Let  ${\bm X}$ and ${\bm Y}$ be Markov processes with a common transition probabilities $\{P^t\}$ and respective initial distributions $\mu$ and $\nu$.  Then for any coupling $({\bm X}, {\bm Y}),$  we have 
\begin{eqnarray}\label{proc_coup_ineq}
\|\mu P^{t} - \nu P^{t}\|_{TV} \leq 2 \mathbb{P}[\tau_{c} > t].
\end{eqnarray}
\end{lem}
\begin{proof}
	By the definition of coupling time, $\mathcal X_t\neq\mathcal Y_t$ implies that $\tau_c> t.$ Note that 
	$\mu P^t$ (resp. $\nu P^t$) is the distribution of $\cX_t$ (resp. $\cY_t$), then \eqref{proc_coup_ineq} follows from  \eqref{coup-ineq}.
\end{proof}

The inequality \eqref{proc_coup_ineq} is the well-known {\it coupling inequality}. 
 A coupling $({\bm X}, {\bm Y})$  
is said to be  {\it optimal} if 
the equality in \eqref{proc_coup_ineq} is achieved for any $t>0$.
In the present paper, we numerically
estimate the rate of geometric ergodicity of ${\bm X}$ (or ${\bm Y}$) via \eqref{proc_coup_ineq}. In practice, it is unrealistic to compute the coupling times for all initial values. Instead, we will develop some theoretical arguments that enable us to extend the result from one initial value to almost all initial values.

\begin{lem}
\label{prop1}
Let $({\bm X}, {\bm Y})= \{(\mathcal{X}_{t}, \mathcal{Y}_{t})\}$ be an irreducible Markov coupling of Markov processes ${\bm X}$ and ${\bm Y}$.  Assume that there exists a pair of initial value $(x_0,y_0) \in E\times E$ and a constant $r_{0} >
0$ such that
\begin{equation}\label{coup_time}
 \mathbb{E}_{(x_{0}, y_{0})}[e^{r_{0}\tau_{c}}] < \infty.
\end{equation}
Then \eqref{coup_time} holds for $(\pi\times\pi)$-almost all initial values.
\end{lem}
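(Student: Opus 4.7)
The plan is to introduce $F(x,y) := \mathbb{E}_{(x,y)}[e^{r_0 \tau_c}] \in [1,\infty]$ together with the ``good set'' $G := \{(x,y)\in E\times E : F(x,y)<\infty\}$ and to show $(\pi\times\pi)(G)=1$, which is exactly the desired conclusion. The hypothesis provides $(x_0,y_0)\in G$, and the two tools I would use are the Markov property of the coupling process on $E\times E$ at a fixed deterministic time and the $\pi\times\pi$-irreducibility of the coupling.

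First I would fix $t\in\cT$ with $t>0$ and decompose
\[\mathbb{E}_{(x_0,y_0)}[e^{r_0\tau_c}] = \mathbb{E}_{(x_0,y_0)}[e^{r_0\tau_c}\mathbf{1}_{\tau_c\le t}]+\mathbb{E}_{(x_0,y_0)}[e^{r_0\tau_c}\mathbf{1}_{\tau_c>t}].\]
On $\{\tau_c>t\}$ the coupling time of the time-$t$-shifted process equals $\tau_c-t$, so the Markov property rewrites the second summand as $e^{r_0 t}\,\mathbb{E}_{(x_0,y_0)}[F(\cX_t,\cY_t)\mathbf{1}_{\tau_c>t}]$. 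Dropping the nonnegative first summand gives the key estimate
\[\mathbb{E}_{(x_0,y_0)}[F(\cX_t,\cY_t)\mathbf{1}_{\tau_c>t}] \le e^{-r_0 t}F(x_0,y_0)<\infty,\]
which forces $F(\cX_t,\cY_t)<\infty$ almost surely on the event $\{\tau_c>t\}$. On the complementary event $\{\tau_c\le t\}$, coupling property (ii) gives $\cX_t=\cY_t$, and the coupling time starting from any diagonal point is $0$, so $F(\cX_t,\cY_t)=1$. Combining the two cases yields $F(\cX_t,\cY_t)<\infty$ $\mathbb{P}_{(x_0,y_0)}$-almost surely, i.e.
\[P^t\bigl((x_0,y_0),\,G^c\bigr)=0\qquad\text{for every } t>0.\]

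The proof then closes by contradiction: if $(\pi\times\pi)(G^c)>0$, then the $\pi\times\pi$-irreducibility of the Markov coupling, applied with starting point $(x_0,y_0)$ and target set $G^c$, would supply some $t>0$ with $P^t((x_0,y_0),G^c)>0$, contradicting the display above. Hence $(\pi\times\pi)(G^c)=0$, which is exactly \eqref{coup_time} for $(\pi\times\pi)$-a.e. pair of initial values. I do not expect a substantive obstacle beyond careful bookkeeping; the whole content is packaged into the one-line Markov identity that turns the single-point exponential moment bound into a statement about the full distribution $P^t((x_0,y_0),\cdot)$, after which irreducibility does the rest. The only place to be slightly careful is to apply the Markov property to the joint coupling process on $E\times E$ rather than to a marginal chain, so that $(\cX_t,\cY_t)$ is genuinely a Markov state to which $F$ can be restarted.
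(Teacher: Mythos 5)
Your proof is correct and rests on the same two ingredients as the paper's: the Markov property at a fixed time to restart the coupling time, and $\pi\times\pi$-irreducibility to propagate the finiteness from $(x_0,y_0)$ to almost every initial pair. The paper organizes the argument by assuming a positive-measure ``bad set'' $A$, using irreducibility to hit $A$ from $(x_0,y_0)$, and then invoking the Markov property to force $\mathbb{E}_{(x_0,y_0)}[e^{r_0\tau_c}]=\infty$; you instead derive the intermediate fact $P^t((x_0,y_0),G^c)=0$ for every $t>0$ directly from the hypothesis, and let irreducibility deliver the contradiction afterward. Your version is slightly cleaner in that it avoids the conditional measure $\mu$ and makes the role of irreducibility a one-line step, but the mathematical content is the same; in particular you were right to flag that the Markov property must be applied to the joint process on $E\times E$, and also to handle the event $\{\tau_c\le t\}$ separately using coupling property (ii), a point the paper implicitly absorbs by taking $A$ disjoint from the diagonal.
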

\begin{proof}
Suppose the lemma does not hold. Then there exists a measurable set $A\subseteq E\times E\backslash\{(x,x):x\in E\}$ with $(\pi\times\pi)(A)>0$ such that for any pair $(x,y)\in A,$ 
\begin{eqnarray}\label{large-tail}
 \mathbb{E}_{(x,y)}[ e^{r_{0}\tau_{c}}]  = \infty.
\end{eqnarray} 
By the irreducibility, there exists $T>0$ such that 
$\mathbb{P}_{(x_{0},
	y_{0})}[ ( \mathcal{X}_{T}, \mathcal{Y}_{T}) \in A] > 0. $
Then by \eqref{large-tail}, together with the Markov property, we have
 \begin{eqnarray*}
  \mathbb{E}_{(x_{0}, y_{0})}[ e^{r_{0}\tau_{c}}] &\geq& \int_{(\mathcal{X}_{T}, \mathcal{Y}_{T})\in A}e^{r_0\tau_c}d\mathbb P_{(x_0,y_0)} \\
 &\ge&\mathbb{P}_{(x_{0},
 	y_{0})}[ ( \mathcal{X}_{T}, \mathcal{Y}_{T}) \in A]\cdot\int_{(\mathcal{X}_{T}, \mathcal{Y}_{T})\in A}e^{r_0\tau_c}d\mathbb P_T\\
 &=&\mathbb{P}_{(x_{0},
 	y_{0})}[ ( \mathcal{X}_{T}, \mathcal{Y}_{T}) \in A]\cdot\mathbb{E}_{\mu}[e^{r_{0}(\tau_{c}-T)}]=\infty,
\end{eqnarray*}
where $\mathbb P_T$ is the conditional probability measure of  $\mathbb P_{(x_0,y_0)}$ conditioning on $ (
\mathcal{X}_{T}, \mathcal{Y}_{T}) \in A$, and
$\mu$ is the distribution of
$(\mathcal X_T,\mathcal Y_T)$ conditional on $A$. This contradicts with
\eqref{coup_time}. 
\end{proof}

One problem with Lemma \ref{prop1} is that many efficient couplings we shall use, 
such as the synchronous coupling and  reflection coupling (see Section 3 for the concrete meaning), are not
irreducible. On the other hand, although the independent coupling (i.e.,  the two marginal processes are updated independently all the time) brings about the irreducibility,
it is usually not efficient for the coupling process. In fact, 
most stochastic processes in
$\mathbb{R}^{k}$ (e.g.,  a strong-Feller process), including all the
numerical examples in this paper, are {\it non-atomic}, which means that 
any two independent trajectories of ${\bm X}$, say $X^{1}_{t}$ and
$X^{2}_{t}$, satisfy $\mathbb{P}[ X^{1}_{t+1} = X^{2}_{t+1}\,|\,
X^{1}_{t} \neq X^{2}_{t}] = 0$ (without loss of generality, here we assume that $\mathcal{T} = \mathbb{Z}_{\ge0}$).
So the independent coupling of a non-atomic Markov process has zero probability of being coupled successfully in finite time.

To overcome this difficulty, we introduce the coupling with independent components. Still, without loss of generality, we assume $\mathcal{T} = \mathbb{Z}_{\ge0}$. {\it A coupling with independent components}
means that  at each step before being coupled, with a positive
probability (which tough can be very small),  the two marginal processes are
updated in an independent way.  The following lemma shows that a
coupling with independent components of a non-atomic Markov process is irreducible. 
Thus, we can use a mixture of the independent coupling and other more efficient
couplings to achieve both the irreducibility and the coupling efficiency.

\begin{lem}
\label{prop2}
Let  $({\bm X}, {\bm Y})=\{(\cX_t,\cY_t)\}$ be a coupling with
independent components of non-atomic Markov processes ${\bm X}$ and ${\bm Y}$. Then $({\bm X}, {\bm Y})$ is $(\pi \times \pi)$-irreducible.  
\end{lem}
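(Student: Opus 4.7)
The plan is to fix an arbitrary measurable $A \subseteq E \times E$ with $(\pi \times \pi)(A) > 0$ and a starting point $(x_0, y_0) \notin \Delta$, where $\Delta := \{(x,x) : x \in E\}$, and to exhibit some $t \in \cT$ with $\mathbb{P}_{(x_0, y_0)}[(\cX_t, \cY_t) \in A] > 0$. First I would observe that the non-atomic hypothesis forces $\pi$ itself to have no atoms: any atom $x_*$ of $\pi$ would, via stationarity and the fact that two independent trajectories almost surely never meet after separating, compel $x_*$ to be absorbing, so $\pi = \delta_{x_*}$, a degenerate case in which the claim is vacuous. Thus $(\pi \times \pi)(\Delta) = 0$, and we may replace $A$ by $A \setminus \Delta$; starting points on $\Delta$ need not be considered since property (ii) keeps the process on $\Delta$ forever once there.

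The engine of the argument is the independent-components structure. At each step prior to coupling there is a strictly positive probability of updating the two marginals independently, and by non-atomicity two distinct trajectories evolving independently almost surely never meet. Hence the event $I_n = \{\text{marginals are updated independently at each of the times } 0, 1, \ldots, n-1\}$ has strictly positive probability $q_n > 0$ starting from $(x_0, y_0)$, property (ii) is never triggered on $I_n$, and the conditional law of $(\cX_n, \cY_n)$ on $I_n$ is exactly $P^n(x_0, \cdot) \otimes P^n(y_0, \cdot)$. Consequently
\[
\mathbb{P}_{(x_0, y_0)}[(\cX_n, \cY_n) \in A] \;\geq\; q_n \cdot \bigl(P^n(x_0, \cdot) \otimes P^n(y_0, \cdot)\bigr)(A).
\]

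To produce an $n$ for which the right-hand side is positive, I invoke the ergodicity of ${\bm X}$: since $\delta_{x_0} P^n$ and $\delta_{y_0} P^n$ both converge to $\pi$ in total variation, the standard bound $\|\mu_1 \otimes \nu_1 - \mu_2 \otimes \nu_2\|_{TV} \leq \|\mu_1 - \mu_2\|_{TV} + \|\nu_1 - \nu_2\|_{TV}$ gives total-variation convergence of $P^n(x_0, \cdot) \otimes P^n(y_0, \cdot)$ to $\pi \otimes \pi$. Since $(\pi \otimes \pi)(A) > 0$, the product assigns mass at least $(\pi \times \pi)(A)/2$ to $A$ for all sufficiently large $n$, and the desired $\pi \times \pi$-irreducibility follows.

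The hardest step I anticipate is the last one: the paper states ergodicity only in the form $\delta_x P^t \to \pi$ without specifying the mode of convergence, whereas my argument uses total-variation convergence. If only a weaker form is at hand, I would instead argue directly that $\pi$-irreducibility combined with non-atomicity rules out periodic behaviour, and then invoke the classical fact that independent copies of an aperiodic $\pi$-irreducible Markov process are jointly $\pi \otimes \pi$-irreducible; the rest of the plan goes through unchanged.
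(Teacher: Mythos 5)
Your proof is correct and takes a genuinely different route from the paper's. The paper's proof opens by claiming it suffices to verify the irreducibility condition on product sets $A_1 \times A_2$; it then applies the ergodicity hypothesis separately to each factor to find a common time $t_0$ at which each marginal charges its respective $A_i$, and finally invokes independent updates and non-atomicity. You instead fix an arbitrary measurable $A \subseteq E \times E$, observe that the conditional law of $(\cX_n, \cY_n)$ on the positive-probability event of $n$ consecutive independent updates is exactly $P^n(x_0,\cdot)\otimes P^n(y_0,\cdot)$, and then use the subadditivity of total variation on products to get $P^n(x_0,\cdot)\otimes P^n(y_0,\cdot)\to\pi\otimes\pi$ in total variation, so this product charges $A$ for large $n$.

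Your approach buys something real: the paper's reduction to rectangles is not obviously valid, because the collection of sets $A$ satisfying ``$\mathbb{P}[(\cX_t,\cY_t)\in A]>0$ for some $t$'' is closed under supersets and countable unions but not under complements, so checking it on a generating $\pi$-system of rectangles does not propagate to the full product $\sigma$-algebra. Your argument handles arbitrary $A$ directly and therefore closes that gap, at the price of needing total-variation convergence in the ergodicity hypothesis — a requirement the paper also uses implicitly (its step ``$\pi(A_1)>0 \Rightarrow P^t(x_0,A_1)>0$ for all large $t$'' already needs more than weak convergence), and which you correctly flag. Your aside that $\pi$ must be atomless is accurate in substance — and indeed follows automatically once TV convergence of the product is in hand, since $P^n(x_0,\cdot)\otimes P^n(y_0,\cdot)$ always vanishes on the diagonal by non-atomicity — but the ``atom implies absorbing'' sketch you give is not a complete argument and you could simply drop it. The fallback via aperiodicity of $\pi$-irreducible chains is a reasonable alternative, though note it is the assumed convergence $\delta_x P^t\to\pi$, not non-atomicity per se, that rules out periodicity.
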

\begin{proof}
It is sufficient to show that for any product set $A_{1} \times A_{2} \in
\mathcal{B} \times \mathcal{B}$ with positive $\pi\times \pi$ measure, there exists some $t_{0} \in
\mathcal{T}$ such that $\mathbb{P}[ (\mathcal{X}_{t_{0}}, \mathcal{Y}_{t_{0}})
\in A_{1} \times A_{2}] > 0$. 

By the ergodicity, since $A_{1} \in \mathcal{B}$ has
positive $\pi$-measure, there exists $T_{1}>0$ such that $\mathbb{P}[ \cX_{t}
\in A_1 ] > 0$ for all $t > T_{1}$. Similarly, there exists $T_{2}>0$ such that $\mathbb{P}[ \cY_{t}
\in A_2] > 0$ for all $t > T_{2}$. Let  $t_{0} = \max \{ T_{1}, T_{2}\}+1$. Because  there is a positive probability that independent updates be chosen for $t = 0, 1, \cdots,
t_{0}$, and the Markov process  is non-atomic, we have $\mathbb{P}[ (\mathcal{X}_{t_{0}},
\mathcal{Y}_{t_{0}}) \in A_{1} \times A_{2}] > 0$.
\end{proof}

\begin{lem}\label{prop3}
Let $({\bm X}, {\bm Y})$ be a coupling with
independent components of non-atomic Markov processes ${\bm X}$ and ${\bm Y}$. Assume that there exist an initial value $x_{0} \in E$ and a constant $r_{0} >
0$ such that
\begin{eqnarray}\label{coup_time2}
  \mathbb{E}_{(x_{0}, \pi)}[e^{r_{0}\tau_{c}}] < \infty.
\end{eqnarray}
Then \eqref{coup_time2} holds for $\pi$-a.e. initial values $x\in E.$
\end{lem}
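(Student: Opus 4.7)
The plan is to mimic the contradiction strategy used in Lemma \ref{prop1}, but adapted to the situation where the second marginal starts from the invariant distribution $\pi$ rather than a single point. Set $h(x) := \mathbb{E}_{(x,\pi)}[e^{r_0\tau_c}]$ and suppose, for contradiction, that the set $B := \{x \in E : h(x) = \infty\}$ satisfies $\pi(B) > 0$. The goal is to derive a contradiction with $h(x_0) < \infty$ by showing that nevertheless $P^T(x_0, B) = 0$ for every $T \geq 1$; this clashes with the $\pi$-irreducibility of $\bm X$ noted in Section 2.1.

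To produce the lower bound on $h(x_0)$ that forces $P^T(x_0, B) = 0$, I will exploit the independent-components structure together with non-atomicity. Fix $T \geq 1$, let $p > 0$ be a uniform per-step lower bound on the probability of choosing an independent update prior to coupling, and let $A_T$ denote the event that an independent update is selected at each of the times $0, 1, \ldots, T-1$. Two observations drive the proof. First, because the Markov process is non-atomic and $\cX_0 = x_0$, $\cY_0 \sim \pi$ start apart with probability $1 - \pi(\{x_0\})$, an inductive application of non-atomicity yields $\tau_c > T$ on $A_T$ almost surely, with $\mathbb{P}_{(x_0,\pi)}[A_T] \geq p^T(1-\pi(\{x_0\}))$. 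Second, on $A_T$ the two coordinates evolve as independent copies of $\bm X$, so the conditional law of $(\cX_T, \cY_T)$ on $A_T$ (integrated against $\cY_0 \sim \pi$) is the product $P^T(x_0, \cdot) \otimes \pi P^T = P^T(x_0, \cdot) \otimes \pi$ by invariance. Applying the strong Markov property at time $T$ to the restriction on $A_T$ therefore yields
\[
\mathbb{E}_{(x_0,\pi)}[e^{r_0\tau_c}] \;\geq\; e^{r_0 T}\, p^T\, (1-\pi(\{x_0\})) \int_E h(x)\, P^T(x_0, dx).
\]
Since the left-hand side is finite by hypothesis, $h$ is $P^T(x_0,\cdot)$-integrable, and because $h \equiv \infty$ on $B$ this forces $P^T(x_0, B) = 0$.

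The argument closes by $\pi$-irreducibility: if $\pi(B) > 0$, there exists $T_0$ with $P^{T_0}(x_0, B) > 0$, contradicting $P^T(x_0, B) = 0$ for all $T \geq 1$. Hence $\pi(B) = 0$, which is the claim. The main obstacle I anticipate is the careful verification of the two observations above, specifically showing that conditionally on $A_T$ the pair $(\cX_t, \cY_t)$ really evolves as two independent copies of $\bm X$ (so that the strong Markov step delivers the clean product structure $P^T(x_0, \cdot) \otimes \pi$), and iterating the non-atomic condition $T$ times to force $\tau_c > T$ on $A_T$ almost surely. The degenerate case $\pi = \delta_{x_0}$ should be handled separately, but there the lemma is vacuous since $h(x_0) < \infty$ already covers $\pi$-a.e. $x$.
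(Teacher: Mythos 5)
Your argument is correct and, in substance, identical to the paper's own proof: both proceed by contradiction, both invoke $\pi$-irreducibility of the marginal $\bm X$ to land $\mathcal{X}_T$ in the bad set with positive probability, and both use the independent-components hypothesis together with non-atomicity and the Markov property at time $T$ to produce the same product-measure lower bound $P^T(x_0,\cdot)\otimes\pi$ (the paper phrases this as $\lambda_A\times\pi$) and thereby force $\mathbb{E}_{(x_0,\pi)}[e^{r_0\tau_c}]=\infty$. Your write-up is slightly more explicit about the conditional product structure and about the case $\pi(\{x_0\})>0$ (a technicality the paper's proof silently passes over), but this is the same argument reorganized rather than a different route.
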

\begin{proof}
Suppose the lemma does not hold. Then there exists a measurable set $A \subseteq E$
with $\pi(A) > 0$ such that for any $x\in A,$
$$
  \mathbb{P}_{(x, \pi)}[ e^{r_{0}\tau_{c}}]  = \infty.
$$

Now let $\mathcal{X}_{0} = x_0$ and $\mathcal{Y}_{0} \sim \pi$.  By the irreducibility of ${\bm X}$, there exists a finite time $T > 0$ such that 
$P^T(x_0,A)>0.$
Denote $\lambda_{A}$ and $\pi_A$ as the conditional  measure of $P^{T}(x_0, \cdot)$ and $\pi$ on $A$, respectively.
Since
$({\bm X}, {\bm Y})$ is a coupling with independent components, the probability that $\mathcal{X}_{t}$ and $\mathcal{Y}_{t}$ remain being independent with each other for $t = 0, 1, \cdots, T$
is strictly positive. Since the Markov processes ${\bm X}$ and ${\bm Y}$ are non-atomic. Then
with probability $1,$ the independent updates will not make
${\bm X}$ and ${\bm Y}$ couple. Hence, 
there exists a positive number $\delta >
0$ such that
\[
\mathbb{P}_{(x_0,\pi)}[ (\mathcal{X}_{T}, \mathcal{Y}_{T}) \in C ] \geq 
\delta\cdot(P^T(x_0,\cdot)\times\pi)(C),\quad\forall\ C\subseteq E\times E.
\]
Applying the similar arguments as in Lemma \ref{prop1}, we have
\[
\mathbb{E}_{(x_{0}, \pi)}[ e^{r_{0}\tau_{c}}] \geq \delta\cdot P^T(x_0,A)\pi(A)\mathbb{E}_{\lambda_{A} \times \pi_A}[
 e^{r_{0}(\tau_{c}-T)}] = \infty.
\]
This contradicts to 
\eqref{coup_time2}. 
\end{proof}

It follows from Lemmata \ref{prop1}-- \ref{prop3} that
for any coupling with independent components, the
finiteness of $\mathbb{E}[e^{r_{0} \tau_{c}}]$ can be generalized from
one pair of initial values to almost all pairs. By the Markov inequality, we have
\begin{eqnarray*}
\mathbb P[\tau_c\ge t]\le\mathbb E[e^{r_0\tau_c}]e^{-r_0t}.
\end{eqnarray*}
Then together with the coupling inequality \eqref{proc_coup_ineq}, the finiteness of  $\mathbb{E}[e^{r_{0} \tau_{c}}]$ yields the geometric contraction/ergodicity.   However, the moment generating function $\mathbb{E}[e^{r_{0} \tau_{c}}]$ is difficult to compute in practice,  especially when $r_{0}$ is close to the critical value  $\sup\{r>0:\mathbb{E}[e^{r\tau_{c}}]<\infty\}.$ To overcome this, we turn to the estimate of the exponential tail of $\mathbb{P}[ \tau_{c} > t]$ instead. This is justified by the following Lemma.

\begin{lem}\label{equivalence}
For any initial distributions $\mu$ and $\nu$, assume that  for $r_0>0,$
\begin{equation}
	\label{exptail}
	\limsup_{t \rightarrow \infty}  \frac{1}{t} \log\mathbb
	P_{(\mu,\nu)}[ \tau_c>t ]  \leq - r_{0}.
\end{equation}
Then for any $\epsilon\in(0,r_0),$ it holds that 
\[
	\mathbb{E}_{(\mu, \nu)}[ e^{(r_{0} - \epsilon) \tau_{c}}] < \infty.
\]
\end{lem}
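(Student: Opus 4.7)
The plan is to convert the exponential tail bound into a bound on the moment generating function via the layer-cake / tail-sum representation of the expectation. Since $\tau_c$ is a non-negative random variable (integer-valued when $\mathcal{T}=\mathbb{Z}_{\geq 0}$, real-valued otherwise), I would use
\[
\mathbb{E}_{(\mu,\nu)}[e^{s\tau_c}] \;=\; 1 + s\int_0^\infty e^{st}\,\mathbb{P}_{(\mu,\nu)}[\tau_c > t]\,dt
\]
with $s = r_0 - \epsilon > 0$ (or the discrete analogue $\sum_{k\geq 0}(e^{s(k+1)}-e^{sk})\,\mathbb{P}[\tau_c > k]$ in the lattice case). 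This identity reduces the problem to controlling the product $e^{st}\,\mathbb{P}[\tau_c > t]$.

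Next, I would exploit the hypothesis \eqref{exptail}. Fix any $\delta \in (0,\epsilon)$, for instance $\delta = \epsilon/2$. The $\limsup$ condition produces a time $T=T(\delta)$ such that
\[
\mathbb{P}_{(\mu,\nu)}[\tau_c > t] \;\leq\; e^{-(r_0-\delta)t}\qquad\text{for all } t\geq T.
\]
Then I would split the integral at $T$: on $[0,T]$ the integrand $e^{st}\,\mathbb{P}[\tau_c > t]$ is bounded by $e^{sT}$ and contributes at most $sT e^{sT}$, while on $[T,\infty)$ it is dominated by $e^{-(\delta-0)t}\cdot e^{-\epsilon t + st}$; more concretely, with $s=r_0-\epsilon$ we get
\[
e^{st}\,\mathbb{P}_{(\mu,\nu)}[\tau_c>t] \;\leq\; e^{(r_0-\epsilon)t}\,e^{-(r_0-\delta)t} \;=\; e^{-(\epsilon-\delta)t},
\]
which is integrable on $[T,\infty)$ because $\epsilon-\delta = \epsilon/2 > 0$. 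Adding both pieces and the constant $1$ yields $\mathbb{E}_{(\mu,\nu)}[e^{(r_0-\epsilon)\tau_c}] < \infty$.

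The argument is essentially a one-line Tonelli/Fubini computation once the exponential tail is made quantitative, so there is no real obstacle; the only subtlety worth flagging is that the $\limsup$ only gives the tail bound eventually, which is why the integral is split at $T$ and the initial piece absorbed into a finite constant. The same splitting works verbatim in the discrete case via the tail-sum representation, so the lemma holds regardless of whether $\mathcal{T}$ is continuous or a lattice.
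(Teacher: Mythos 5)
Your proof is correct and takes essentially the same approach as the paper: convert the $\limsup$ hypothesis into a quantitative tail bound $\mathbb{P}[\tau_c>t]\le e^{-(r_0-\epsilon/2)t}$ valid for large $t$, then show the moment generating function at $s=r_0-\epsilon$ is finite by comparison. The paper slices $\{\tau_c>M\}$ into unit intervals and bounds the resulting geometric series, whereas you invoke the layer-cake identity and split the integral at $T$; these are interchangeable bookkeeping devices for the same argument.
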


\begin{proof}
 By \eqref{exptail},  for  any $\epsilon\in(0,r_0),$ there exists $t_\epsilon<\infty$ such that for all $t\ge t_\epsilon,$ it holds that 
\begin{eqnarray*}
		\mathbb P_{(\mu, \nu)}[\tau_c>t ]\le e^{-(r_0 -\epsilon/2)t}. 
\end{eqnarray*}
Thus, for any $N>t_\ep,$
\begin{eqnarray*}
& & \mathbb{E}_{(\mu, \nu)}[ e^{(r_{0} - \epsilon) \tau_{c}}\cdot1_{\tau_c>N}]\\
&\le&\sum_{i=N}^{\infty}e^{(i+1)(r_0-\epsilon)}\mathbb P[ \tau_c=i ]\le\sum_{i=N}^{\infty}e^{(i+1)(r_0 -\epsilon)}e^{-(r_0-\epsilon/2)i}=e^{(r_0-\epsilon)}\sum_{i=N}^\infty e^{-i \epsilon /2},
\end{eqnarray*}
which goes to zero as $N$ goes to infinity. 
Hence, $\mathbb{E}_{(\mu,\nu)}[ e^{(r_{0} - \epsilon) \tau_{c}}] $ must be finite.
\end{proof}

Combine the above lemmata together,   
we have the following.
\begin{prop}\label{prop:summary}
Let $({\bm X}, {\bm Y})$ be a coupling with
independent components of non-atomic Markov processes ${\bm X}$ and ${\bm Y} $. 
\begin{itemize}
\item[(i)]Assume that there exist  an initial pair $(x_0,y_0)\in E \times E$ and $r_0>0$ such that 
\begin{equation*}
\limsup_{t \rightarrow \infty}  \frac{1}{t} \log\mathbb
P_{(x_0,y_0)}[ \tau_c>t ]  \leq - r_{0}.
\end{equation*}
Then for any $\epsilon\in(0,r_0)$, ${\bm X}$ (or ${\bm Y}$) is
geometrically contracting with rate $(r_0-\epsilon);$ 

\item[(ii)]  Assume that there exist $x_0\in E$ and $r_0>0$ such that  
\begin{equation*}
\limsup_{t \rightarrow \infty}  \frac{1}{t} \log\mathbb
P_{(x_0,\pi)}[ \tau_c>t ]  \leq - r_{0}.
\end{equation*} 
Then for any $\epsilon\in(0,r_0)$, ${\bm X}$ (or {\boldmath $Y$}) is
geometrically ergodic with rate $(r_0 - \epsilon)$. 
\end{itemize}
\end{prop}

\subsection{An upper bound of the geometric rate}
In general, the coupling inequality \eqref{proc_coup_ineq} only gives a lower bound of the geometric convergence/contraction rate.  We argue that in some cases, e.g., the random perturbation of a logistic map considered in Section 4.4,  the upper bound of the geometric ergodicity can be also estimated by using the first passage times because of the existence of the optimal coupling.

For sake of simplicity,  we consider the discrete-time Markov processes.
Recall that a coupling is said to be optimal if the equality in \eqref{proc_coup_ineq} holds for all times. 
It has been shown that for any two mutually singular probabilities
$\mu$ and $\nu$,  an optimal coupling with initial distribution
$\mu\times\nu$ exists and was explicitly constructed in \cite{griffeath1975maximal, pitman1976coupling}.

\begin{prop}
	\label{upperbound}
Let ${\bm X}=\{X_n; n\in\mathbb Z_{\ge0}\}$ and ${\bm Y}=\{Y_n; n\in\mathbb Z_{\ge0}\}$ be Markov processes on $E$ with initial conditions $X_0=x$ and $Y_0=y,$ respectively, where $x\neq y.$
Let  $\{(A_{n}, B_n)\}_{n=0}^\infty$ be a sequence of disjoint pairs of subsets in $E\times E$ such that 
$x\in A_{0}$, $y \in B_{0}.$ 
Assume that 
\[\rho:=
\limsup_{n\rightarrow
\infty} \frac{1}{n}\log\mathbb{P}[ \min \{ \eta_{x}, \eta_{y}\} > n]>0,
\]
where \[\eta_{x} = \min_{n > 0} \{ X_{n} \in A_{n}^{c}\},\quad \eta_{y} = \min_{n> 0} \{ Y_{n} \in 
B_{n}^{c} \}.\]
Then if ${\bm X}$ (or ${\bm Y}$) is geometrically contracting with rate $r>0$,
we have $r\le\rho.$
\end{prop}
\begin{proof}
Let $({\bm X}, {\bm Y})=\{(\mathcal X_n,\mathcal Y_n)\}$ be the optimal coupling of ${\bm X}$ and ${\bm Y}$. 
Then we have
\[
\|P^{n}(x,\cdot) - P^{n}(y,\cdot)\|_{TV} = 2 \mathbb{P}[ \tau_{c}> n],
\]
where $\{P^t\}$ is the common transition probabilities of ${\bm X}$ and ${\bm Y}$.
Note that at the
coupling time $\tau_{c},$ we have $\mathcal{X}_{\tau_{c}} =
\mathcal{Y}_{\tau_{c}}.$ This means that before time $\tau_{c}$, either $\mathcal{X}_{n}$ has
exited from $A_{n}$ or $\mathcal{Y}_{n}$ has exited from $B_{n}$, i.e., $\tau_c \leq n$  implies  $\min\{\tilde\eta_x,\tilde\eta_y\} \leq n.$ Here, $\tilde\eta_x,\tilde\eta_y$ are defined similarly as $\eta_x,\eta_y,$ but for the $\cX_n,\cY_n$  instead.  By noting  that for any $n\ge 0,$ $\cX_n$ (resp. $\cY_n$) has the same distribution as $X_n$ (resp. $Y_n$), we have
\[\mathbb P[\min\{\eta_x,\eta_y\}>n]=\mathbb P[\min\{\tilde\eta_x,\tilde\eta_y\}> n]<\mathbb P[\tau_c > n] .\]
This completes the proof.
\end{proof}

In Section 4.4, for a random  perturbed circle map  with a stable
2-periodic orbit, we shall give both  upper and  lower bounds of the
geometrically ergodic rate through the first exit times and the coupling times, receptively.

\subsection{Deterministic dynamics and random perturbations} 
Throughout this paper, by a discrete- or continuous-time deterministic dynamical system, we mean by iterating a map
\begin{eqnarray}\label{disc-map}
f:E\to E,
\end{eqnarray}
or an ordinary differential equation (ODE)
\begin{equation}\label{ODE}
\mathrm{d}Z_{t}/ \mathrm{d}t = g(Z_{t}),\quad t\in\mathbb R
\end{equation}
where $g$ is  a vector field on $E$  which  is locally Lipschitz
continuous.

In this paper, we mainly focus on the Markov processes arising from the random perturbations of a deterministic dynamical system. To be specific,  we shall consider 

\medskip

\noindent (i) The random perturbation of a discrete-time dynamics \eqref{disc-map} 
\begin{equation}
\label{discrete}
X_{n+1} = f(X_{n}) + \zeta_{n},
\end{equation}
where $\{\zeta_{n}\}$ 
are  independent random variables taking values in $E$  which will  be defined  specifically  in each particular situation;  

\medskip

\noindent (ii) The random perturbation of a continuous-time dynamics \eqref{ODE}  given by a stochastic differential equation (SDE) on $\mathbb R^k,$
\begin{equation}
\label{SDE}
\mathrm{d}X_{t} = g(X_{t}) \mathrm{d}t + \sigma(X_{t}) \mathrm{d}W_{t},
\end{equation}
where $\sigma(\cdot)$ is a $k\times k$ matrix-valued function and
$W_{t}$ is a Wiener process on $\mathbb{R}^{k}$. Here, $g$ and $\sigma$ are assumed to be  smooth enough to give a
well-defined solution $X_{t}$ for all $t > 0$. 

In the remainder of this section,  we briefly review a classical hierarchy  of chaotic properties of  deterministic dynamical systems, from the ergodicity to mixing. Readers may refer to \cite{katokBook95,walters82} for more details.  For sake of clarity and more fitting to the situation in Section 4,  we use intuitive examples of maps  on $\mathbb S^1,$ which are definitely not  essential restrictions. 

\medskip

\noindent {\it - Irrational rotations 
and ergodicity.} 
A deterministic map $f_\al:\mathbb S^1\to\mathbb S^1$ is said to be an irrational rotation (or quasi-periodic) if $f_\al x=x+\al$ (mod 1), where $\al$   is an irrational number.
Irrational rotation on $\mathbb S^1$ exhibits certain regular recurrent behavior that starts from any arbitrary initial point, the trajectory will visit 
any interval subsets in certain ``periodic" way. 
This is in fact  what the ergodic property says. 

For a deterministic dynamics $f$, the measure-theoretically chaotic property  is usually defined with respect to certain $f$-invariant measure $m,$ i.e., $m(fA)=m(A).$
An $f$-invariant measure $m$ is said to be ergodic if for any $\varphi\in C^0(E),$ $\psi\in L^1(m), $ it holds that 
\begin{eqnarray}\label{ergodic}
\dfrac{1}{n}\sum_{i=0}^{n-1}\int \varphi(f^ix)\psi(x)dm(x)\to \int \varphi dm\int\psi dm,\quad n\to\infty.
\end{eqnarray}
Another (and more well-known) characterization of ergodicity is through the Birkhoff ergodic theorem. For any $
\varphi\in L^1(m),$ for $m$-a.e. $x\in E,$ it holds that
\begin{eqnarray}\label{birkhoff}
\dfrac{1}{n}\sum_{i=0}^{n-1}\varphi(f^ix)\to m(A),\quad n\to \infty
\end{eqnarray}
if $f$ is ergodic with respect to $m$.
The expression \eqref{birkhoff} basically says that a typical trajectory visits any positive-measured set repeatedly  with frequency of the set measure.  The  irrational rotation on $\mathbb S^1$ is  ergodic with respect to the Lebesgue measure which is also the unique invariant measure.

\medskip

\noindent {\it - Expanding maps and 
mixing.}
The rotations on $\mathbb S^1$ only  
indicate a low-complexity of chaotic  properties since different orbits exhibits similar asymptotic behaviors. 
To characterize  more  non-trivial chaotic behaviors,  certain expanding properties are expected. 
A smooth circle map $f$
is said to be {\it  
expanding} if it always holds that $|f'|\ge1.$  An expanding map is further called uniform expanding if $|f'|$ is uniformly away from $1$. 
An expanding map often comes with the mixing property.
An $f$-invariant measure $m$ is said to be  {\it (strong) mixing} if for any $\varphi\in C^0(E),$ $\psi\in L^1(m), $ it holds that 
\begin{eqnarray}\label{mixing}
\int \varphi(f^nx)\psi(x)dm(x)\to \int \varphi dm\int\psi dm,\quad n\to\infty.
\end{eqnarray}
A mixing measure 
 is said to be {\em exponentially (resp. polynomially)
mixing}  if  \eqref{mixing} converges  in the exponential ({\it resp.} polynomial) way.  
It is well-known that the uniform  expanding maps are exponential mixing. For a general (non-uniform) expanding map however,    the exponential mixing property may be lost. A classical example illustrating this is the  expanding map with the only one neutral fixed point;
see Section 4.2 for more details.

\medskip

An intuitive way to understand the chaotic properties of ergodicity and mixing is to look at  how two different subsets (measure-theoretically) meet with each other under evolutions.  
Taking $\varphi=\chi_A, \psi=\chi_B$ where $A, B$ are two measurable subsets, respectively.  The ergodicity property \eqref{ergodic} ({\it resp.} mixing property \eqref{mixing})  yields
\begin{eqnarray*}\label{ergodic2}
\dfrac{1}{n}\sum_{i=0}^{n-1}m(f^{-i}A\cap B)\to m(A)m(B),\quad n\to\infty
\end{eqnarray*}
\[({\it resp.}\quad
m(f^{-n}A\cap B)\to m(A)m(B),\quad n\to\infty.)\]
It is not hard to see that the ergodicity property is mild  which  can be guaranteed if any two subsets can meet with each other in a ``regular" way (for instance, the irrational rotations on $\mathbb S^1$);
On the other hand, the mixing property requires a certain kind of ``stretching" of the system so that any two subsets can meet with each other eventually and forever. By  this, we see that the mixing is a stronger property than the ergodicity.

In section 4, we shall use four examples of circle maps with the degree of chaoticity  goes down from the exponentially/polynomially mixing to the ones without any mixing behaviors (which even exhibit contraction properties).  We observe that  although the geometric ergodicity usually holds when random noises are added, the rate can vary, as the noise vanishes, in different ways if the unperturbed  dynamics exhibits  distinct level of complexities. Again, we remark that  
the $\mathbb S^1$ setting is only for convenience. The scenario should be observed in more general state space. 

\subsection{Numerical scheme of SDEs}
In the real simulations, an SDE  is numerically computed at discrete times.  We usually choose a time step size $0<h \ll 1$ and consider the discrete-time trajectories $X_{0}, X_{h}, \cdots, X_{nh}, \cdots$. To
avoid confusion and make notations consistent, let ${\bm X} =
\{X_{t}; t \in
\mathbb{R}_{\ge0}\}$ be the true trajectories of the SDE, 
and $\bar{\bm X} = \{\bar{X}_{t}; t \in  h\mathbb{Z}_{\ge0} \}$ be
the trajectories of the numerical integrator. 
In addition, we denote
${\bm X}^{h} = \{X^{h}_{n}; n \in \mathbb{Z}_{\ge0}\}$ as the time-$h$ sample chain of
${\bm X}$ such that $X^{h}_{n} = X_{nh}$, and $\bar{\bm X}^{h} = \{\bar{X}^{h}_{n}; n \in\mathbb{Z}_{\ge0} \}$ as the time-$h$ sample chain of $\bar{\bm X}$
with $\bar{X}^{h}_{n} = \bar{X}_{nh}$. 

The
most commonly used numerical schemes of SDE \eqref{SDE}
is the Euler-Maruyama scheme
\[
\bar X_{(n+1)h} = \bar X_{nh} + g(\bar X_{nh})h + \sigma(\bar X_{nh}) \sqrt{h} N_{n},
\]
where $\{N_{n}\}$ are standard normal random variables independent for
each $n$. Note that the time-$h$ sample chain $\bar{X}^{h}_{n}$ fits the setting of discrete-time
random perturbed dynamics \eqref{discrete}
 \[
\bar{X}^{h}_{n+1} = \bar{X}^{h}_{n} + g(\bar{X}^{h}_{n})h + \sigma(\bar{X}^{h}_{n}) \sqrt{h}
N_{n}.
\]
The  Euler-Maruyama method can be improved to the Milstein method. The $1$D Milstein method reads as
\[X_{(n+1)h} = X_{nh} + f(X_{nh})h + \sigma(X_{nh}) \sqrt{h} N_{n}  +
	\frac{1}{2}\sigma(X_{nh})\sigma'(X_{nh})(N_{n}^{2} - 1)h.
\]
In particular, on any dimensions, the Euler-Maruyama method coincides with the
Milstein method if $\sigma(X_{t})$ is a constant matrix.

\medskip

Now, we recall the strong and weak approximations defined 
in \cite{kloeden2013numerical}. 
Let $T < \infty$ be a given finite time. If for $\ga>0,$
\[
\mathbb{E}[ | \bar{X}_{T} - X_{T} | ] \leq C(T) h^{\gamma}
\]
holds for all sufficiently small $h>0$, then we say that $\bar{{\bm X}}$ converges
{\it strongly}  to ${\bm X}$ with the order $\gamma$. 
Let $C^{\ell}_{P}$ denote the
space of $\ell$ times continuously differentiable functions with
polynomial growth rate for both the function itself and all the partial
derivatives up to order the $\ell$. If for $\ga>0$, any test function $g \in
C^{2(\gamma + 1)}_{P}$ and any given finite time $T$, we have
\[|\mathbb{E}[g(\bar{X}_{T})] - \mathbb{E}[g(X_{T})]| \leq C(T)
h^{\gamma},
\]
then we say that $\bar{\bm X}$ converges to ${\bm X}$ {\it weakly} with order $\gamma$.
It is well known that under suitable regularity conditions, the Euler-Maruyama scheme has strong  convergence with order $0.5$ and weak
convergence with order $1.0$. The Milstein scheme has strong convergence with order $1.0$ \cite{kloeden2013numerical}.

\section{Description of algorithm}
The main idea of this paper is to use the exponential tail of the coupling time distributions to numerically estimate the geometric ergodicity of a stochastic process. Assume that for a pair of initial values $(x_0,y_0)$ we have
$$
\mathbb{P}_{x_0,y_0}[\tau_c > t] \approx C e^{-r t},\quad\forall t\gg1.
$$
It follows from
Proposition \ref{prop:summary} that for almost every pair of initial values $(x,y)$,
$$
  \limsup_{t \rightarrow \infty} \frac{1}{t} \log ( \| P^{t}(x,\cdot) - P^{t} (y,\cdot)\|_{TV}) < - r.
$$
Replacing  $y$ by a sampling from the invariant distribution $\pi$, the numerical verification of geometric ergodicity is also  obtained by this approach.

Since this paper studies the coupling times in a numerical way,  we consider, for the sake of definiteness, 
the time-discrete Markov process ${\bm X}=\{X_n; n\in\mathbb Z_{\ge0}\}$
as it fits both cases of random perturbations of an iterative mapping and the time-$h$ sample chain of an SDE.  Note that here, the $n\in\mathbb Z_{\ge0}$ corresponds to the number of  iterations or numerical steps. For sake of differentiation and clarity, in the SDE setting, we will use $n _c$ to denote the numerical steps needed for  a successful coupling, which  of course  depends on the step size $h.$ The physical coupling time will be $\tau_c=n_ch.$

\subsection{Coupling methods}

Consider a Markov coupling $({\bm X}, {\bm Y})$. In the theoretical proof,  a coupling is usually done by making trajectories of both ${\bm X}$ and ${\bm Y}$ enter
a ``small set'' which 
satisfies the minorization condition \cite{meyn2012markov}. Numerically however, these couplings are not the most efficient ones. We will use a mixture of the following coupling methods to achieve the numerical coupling efficiently.

\medskip

\noindent{\it-  Independent coupling.} Independent coupling means that when running
the coupling process $({\bm X}, {\bm Y})$, the noise
terms in the two marginal processes $\cX_n$ and $\cY_n$ are independent until they are coupled. In
other words, we have
\[
  (\mathcal{X}_{n+1}, \mathcal{Y}_{n+1}) = \left ( f( \mathcal{X}_{n}) +
    \zeta^{1}_{n}, f( \mathcal{Y}_{n}) + \zeta^{2}_{n}\right ),
\]
where for each $n,$ $(\zeta^{1}_{n},\zeta^{2}_{n})$ is a pair of independent random variables.  In the theoretical 
studies, independent coupling is frequently used combined  with the renewal
theory to show the different rates of convergence to the invariant probability measure. In this paper, the independent coupling is to make the coupling process admit independent components so that Lemmata \ref{prop2} and \ref{prop3} are applicable.  

\medskip

\noindent{\it - Synchronous coupling.} Another commonly approach to coupling two
processes is the synchronous coupling. Contrary to the independent coupling for which the randomness in the two stochastic trajectories are totally unrelated, in the synchronous coupling, we always put the
same randomness to the both marginal processes until they are coupled, i.e., 
\[
  (\mathcal{X}_{n +1}, \mathcal{Y}_{n + 1}) = \left ( f( \mathcal{X}_{n}) +
    \zeta^{1}_{n}, f( \mathcal{Y}_{n}) + \zeta^{2}_{n}\right ),
\]
where $\zeta^{1}_{n} = \zeta^{2}_{n}$ for any  $n<n_c.$ The advantage of the synchronous coupling is that if the
deterministic part of the system 
already admits some kind of stability, then
$\mathcal{X}_{n}$ will approach to $\mathcal{Y}_{n}$ quickly when
the same noise is added each time \cite{arnold1995random}. The
synchronous coupling not only requires less assumptions on the random
terms, but also builds some potential connections between the random
dynamical system and stochastic differential equations; see Section 5.4
for a concrete example of the implementation of the synchronous coupling.

\medskip

\noindent{\it - Reflection coupling.} When the dimension of the state space is greater
than $2$, two Wiener processes will meet less often than the one/two dimensional  case. This makes the independent coupling less effective. The reflection coupling  will play a role instead.
As an example, take the Euler-Maruyama scheme of the SDE
\[
  \bar{X}^{h}_{n+1} = \bar{X}^{h}_{n} + f(\bar X_{n}^h)h + \sigma \sqrt{h} N_{n},
\]
where  $\sigma$ is an invertible constant matrix, and $N_{n}$ is a normal random variable with mean zero and
covariance matrix $\mathrm{Id}_k$. 
The reflection coupling means that we run the time-$h$ chain ${\bar{X}}^{h}_{n}$ as
\[
{\bar{X}}^{h}_{n+1} = {\bar{X}}^{h}_{n} +
  f({\bar{X}}^{h}_{n})h + \sigma \sqrt{h}  N_{n};
\]
while  run ${\bar{Y}}^{h}_{n}$ as 
\[{\bar{Y}}^{h}_{n+1} ={\bar{Y}}^{h}_{n} +
  f({\bar{Y}}^{h}_{n})h + \sigma \sqrt{h} P
  N_{n},
\]
where $P = I - 2 e_{n}e^\top_{n}$ is a projection matrix with
\[
  e_{n} = \frac{\sigma^{-1}({\bar{X}}^{h}_{n} - {\bar{Y}}^{h}_{n})}{\| \sigma^{-1}({\bar{X}}^{h}_{n} -
   {\bar{Y}}^{h}_{n})\| }.
\]
In other words,  the noise term is reflected against the hyperplane
that orthogonally passes the midpoint of the line segment connecting
${\bar{X}}^{h}_{n}$ and ${\bar{Y}}^{h}_{n}$.

Theoretically, it has been proved 
that  for the Brownian motions, the reflection coupling is optimal \cite{hsu2013maximal, lindvall1986coupling}, i.e, the equality in \eqref{proc_coup_ineq} is achieved for any $t>0$. It also works
well for many SDEs \cite{chen1997estimation, mufa1996estimation, eberle2011reflection, eberle2016reflection,lindvall1986coupling}, including the Langevin dynamics with  degenerate noise \cite{bou2018coupling,   eberle2019couplings}. The reflection coupling introduced above is also applicable to some non-constant $\sigma$ under suitable assumptions
\cite{lindvall1986coupling}. However, for a general non-constant
$\sigma(x)$, the ``true reflection'' is given by the Kendall-Cranston
coupling with respect to the Riemannian matrix
$\sigma^{T}(x)\sigma(x)$ \cite{cranston1991gradient,
  hsu2002stochastic, kendall1989coupled}, which is more difficult to
implement numerically. 

\medskip

\noindent{\it- Maximal coupling.} In the numerical simulations, the above
three couplings can only bring $\mathcal{X}_{n}$ close to
$\mathcal{Y}_{n}$. We still need a mechanism  to make $\mathcal{X}_{n+1}
 = \mathcal{Y}_{n+1}$ with certain  probability. The maximal
coupling aims to achieve this. It is derived to couple two trajectories as much as
possible at the next step, which is in fact modified from the now well-known
Doeblin coupling \cite{doeblin1938expose}.  We adopt the name ``maximal coupling'' from
\cite{jacob2017unbiased}. 

Assume that at certain step $n$, $(\mathcal{X}_{n} , \mathcal{Y}_{n})$
takes the value  $(x, y) \in E \times E$. Denote the probability measures associated with $f(x) + \zeta^1_n$ and $f(y) + \zeta^2_n$ by $\mu_x$ and $\mu_y$, respectively. Let $\nu_{x,y}$ be the ``minimum probability measure" of $\mu_x$ and $\mu_y$ such that
\[ 
\nu_{x,y}(A) = \dfrac{1}{\eta}\min \{\mu_x (A), \mu_y (A)\},
\]
where $\eta$ is a normalizer to make $\nu_{x,y}$ a probability
measure. At the next step, $(\mathcal{X}_{n + 1}, \mathcal{Y}_{n + 1})$  is sampled such that
\begin{itemize}
  \item[-] with probability $(1- \eta)$,
    \[
        \mathcal{X}_{n + 1} \sim \frac{1}{1-\eta}(\mu_x - \eta \nu_{x,y}), \quad \mathcal{Y}_{n + 1} \sim \frac{1}{1-\eta}(\mu_y - \eta \nu_{x,y})
    \]
    \item[-] with probability $\eta$,
    \[
    \zeta^1_n = \zeta^2_n \sim \nu_{x,y}.
    \]
\end{itemize}
In other words, ${\bm X}$ and ${\bm Y}$ are coupled if and only if the
two samples fall into a ``common future" simultaneously. We remark that
the classical version of Doeblin coupling requires that the two
trajectories enter a certain predefined ``small set"
simultaneously. Then a construction called the Nummelin split
guarantees them to be coupled with certain positive probability. However,  such a construction becomes
unnecessary when running the numerical simulations.  We can couple them whenever the probability distributions
of the next step have enough overlap.

\subsection{Numerical Algorithm}
We propose the following two numerical algorithms to estimate the
exponential tail of the coupling time for the rate of geometric
contraction/ergodicity. Both algorithms trigger the maximal coupling when distance between the two trajectories of a
coupling is smaller than a certain threshold. Since the maximal coupling
should have $O(1)$ successful rate when it is triggered, the threshold
$d$ in {\bf Algorithm 1} and {\bf Algorithm 2} should be proportional
to the standard deviation of distribution for the next step. The input of
{\bf Algorithm 1} is a pair of initial points $(x,y)$, and the output
is a lower bound of the geometric contraction rate of $\| P^n(x,\cdot)
- P^n(y,\cdot) \|_{TV}$. {\bf Algorithm 2} takes input of a point $x
\in E,$ and produces a lower bound of the convergence rate of $\|
P^n(x,\cdot) - \pi \|_{TV}$. In {\bf Algorithm 2}, we need to sample
from the invariant probability measure. This is done by choosing the
initial value of $\mathcal{Y}_0$ from a long trajectory of $X_n$, such
that $\mathcal{Y}_{0}$ is approximately sampled from the invariant
distribution $\pi$. 

Throughout this paper, coupling time distributions in the numerical examples are plotted in the log-linear plots with powers of $10$; while the slope of an exponential tail is computed by fitting $\log \mathbb{P}[ \tau_{c} > n]$ versus $n$ with a
linear function. Hence the slope of the coupling time distribution curves
equals $(\log 10)^{-1}$ times the corresponding output of {\bf
  Algorithm 1} or {\bf Algorithm 2}. 

\begin{algorithm}
\label{alg1}
\caption{Estimate geometric rate of contraction}
\begin{algorithmic}
\State {\bf Input: } Initial values $x, y \in E$
\State {\bf Output: } A lower bound of geometric rate of contraction $r > 0$
\State Choose threshold $d > 0$
\For {i = 1 to N}
\State $\tau_i = 0$,  $n = 0$, $(\mathcal{X}_n, \mathcal{Y}_n) = (x,y)$ 
\State Flag = 0
\While{Flag = 0}
\If {$|\mathcal{X}_n - \mathcal{Y}_n | > d $}
\State Compute $(\mathcal{X}_{n+1}, \mathcal{Y}_{n+1})$ using reflection coupling, synchronous coupling, or independent coupling
\State $n\gets n+1$
\Else 
\State Compute $(\mathcal{X}_{n+1}, \mathcal{Y}_{n+1})$ using maximal coupling
\If{$\zeta^1_n= \zeta^2_n\sim \nu_{x,y}$} 
\State Flag = 1
\State $\tau_i = n$
\Else
\State $n \gets n+1$
\EndIf
\EndIf
\EndWhile
\EndFor
\State Use $\tau_1, \cdots, \tau_N$ to compute $\mathbb{P}[\tau > n]$
\State Fit $\log \mathbb{P}[\tau >n]$ versus $n$ by a linear function. Compute the slope $- r$.
\end{algorithmic}

\end{algorithm}

\begin{algorithm}
\label{alg2}
\caption{Estimate convergence rate to $\pi$}
\begin{algorithmic}
\State {\bf Input: } Initial values $x\in E$
\State {\bf Output: } A lower bound of convergence rate $r > 0$ to $\pi$
\State Choose a threshold $d > 0$, another initial point $y \in E$, and a time step size $H$
\State Let $y_0 = y$
\For {i = 1 to N}
\State Let $X_0 = y_{i-1}$. Simulate $X_t$ for time $H$
\State $y_i \gets X_H$
\State $\tau_i = 0$,  $n = 0$, $(\mathcal{X}_n, \mathcal{Y}_n) = (x,y_i)$ 
\State Flag = 0
\While{Flag = 0}
\If {$|\mathcal{X}_n -  \mathcal{Y}_n| > d $}
\State Compute $(\mathcal{X}_{n+1}, \mathcal{Y}_{n+1})$ using reflection coupling, synchronous coupling, or independent coupling
\State $n\gets n+1$
\Else 
\State Compute $(\mathcal{X}_{n+1}, \mathcal{Y}_{n+1})$ using maximal coupling
\If{$\zeta^1_n = \zeta^2_n \sim \nu_{x,y}$} 
\State Flag = 1
\State $\tau_i = n$
\Else
\State $n\gets n+1$
\EndIf
\EndIf
\EndWhile
\EndFor
\State Use $\tau_1, \cdots, \tau_N$ to compute $\mathbb{P}[\tau > n ]$
\State Fit $\log \mathbb{P}[\tau > n ]$ versus $n$ by a linear function. Compute the slope $- r$.
\end{algorithmic}

\end{algorithm}

Since the geometric ergodicity implies the geometric contraction, in practice, it is sufficient only to run the {\bf
Algorithm 2} to detect the rate of geometric convergence/contraction
if the sampling from $\pi$ is possible. {\bf Algorithm 2} does not work
well if the convergence rate is too slow for a practical long time
trajectory to accurately represent samples from $\pi$. Theoretically,
one can still run
{\bf Algorithm 1} in this situation to get the geometric contraction
rate. However, a slow geometric convergence rate usually means the
geometric contraction rate is slow as well, which also affects the
implementation of {\bf Algorithm 1}.

\medskip

It remains to discuss the implementation of the maximal coupling. If
the probability density function of both $\mathcal{X}_{n+1}$ and
$\mathcal{Y}_{n+1}$ can be explicitly given, denoted by  $p^{(x)}(z)$
and $p^{(y)}(z)$ respectively, one can perform the maximal coupling by
comparing these two probability density functions. We adopt the
algorithm introduced in \cite{jacob2017unbiased,
  johnson1998coupling}. See  {\bf Algorithm 3} for the implementation
details. 

\begin{algorithm}[h]
\caption{Maximal coupling}
\label{max}
\begin{algorithmic}
\State {\bf Input:} $(\mathcal{X}_{t}, \mathcal{Y}_{t})$
\State {\bf Output:} $(\mathcal{X}_{t+1}, \mathcal{Y}_{t+1})$, and
$\tau_{c}$ if coupling is successful.
\State Compute probability density functions $p^{(x)}(z)$ and
 $p^{(y)}(z)$.
\State Sample $\mathcal{X}_{t+1}$ and calculate $W = Up^{(x)}(
\mathcal{X}_{t+1})$, where $U$ is a uniform random variable on
$(0, 1)$.
\If {$W \leq p^{(y)}(\mathcal{X}_{t+1})$}
\State $\mathcal{Y}_{t+1} = \mathcal{X}_{t+1}$, $\tau_{c} = t+1$
\Else
\State Sample $\mathcal{Y}_{t+1}$ and calculate $W' = Vp^{(y)}(
\mathcal{Y}_{t+1})$, where $V$ is a uniform random variable on
$(0, 1)$.
\While {$W' \leq p^{(x)}( \mathcal{Y}_{t+1})$}
\State Resample $\mathcal{Y}_{t+1}$ and $V$. Recalculate $W' = Vp^{(y)}(
\mathcal{Y}_{t+1})$.
\EndWhile
\State $\tau_{c}$ is  still undetermined. 
\EndIf
\end{algorithmic}
\end{algorithm}

\subsection{Some remarks}
As discussed in Section 2.2,  the reflection/synchronous coupling does not give an irreducible process in general, and we use a mixture of
independent coupling and reflection/synchronous coupling so
that the coupling has ``independent components''. To achieve this, at each step, we
generate an i.i.d. Bernoulli random variable $\Gamma$ with $\mathbb{P}[ \Gamma
= 1] = \beta>0$, which is independent of everything else. The independent coupling is chosen whenever $\Gamma = 1,$ and we use the reflection/synchronous coupling for otherwise. It then
follows from Lemmata \ref{prop2} and \ref{prop3} that the exponential
tail of the coupling time can be generalized to almost every initial
values. It is difficult to rigorously prove the effect of $\beta$. Our numerical simulation result (see section 5.3) shows that a smaller $\beta$ often corresponds to a higher convergence rate because the reflection coupling is more efficient.  

In practice, for all the examples we have tested and all the couplings we have
used, the exponential tails starting from
different initial values have the same rate. We believe that the requirement of the independent components
is only a technical limitation. Lemmata \ref{prop2} and
\ref{prop3} should hold true for a very general class of irreducible Markov
processes and couplings.

\section{Geometric ergodicity of time-discrete stochastic dynamics}
It has been observed that for qualitatively different deterministic
dynamical systems, their small random perturbations also have
qualitatively different asymptotic behaviors
\cite{lin2004convergence}. In this section, we  numerically perform four examples of random perturbations of 
deterministic maps on $\mathbb{S}^{1}$ with distinct  chaotic behaviors:  (1)  a uniformly expanding map; 
(2) an (almost) expanding  map admitting a neutral fixed point; 
(3) an irrational rotation; 
(4) a logistic map with a stable periodic orbit.
We note that the complexity  of dynamics is decreasing from (1)--(4).
For random perturbations of the above four dynamics,  the geometric convergence rates are computed and  compared under different noise magnitudes. Qualitative changes of the geometric convergence rates versus noises  are observed.  In
general,  as noise vanishes, the geometric convergence rate decreases in a slower way as the complexity of the underlying deterministic dynamics increases. Heuristic explanations of such changes are provided.

\subsection{Expanding circle maps}
\label{expanding}
Consider a deterministic dynamics  given by the iterative mapping $f: \mathbb S^{1} \rightarrow \mathbb S^{1}$:
\[
  f(x) = 2x + a \sin(2\pi x) \, (\mbox{mod } 1) .
\]
Note that for $a < 1/(2\pi)$, $f$ is uniformly expanding (i.e., $|f'| \geq 2(1-\pi a)>1$).
It has been known that the uniformly expanding map is  exponentially mixing with respect to an invariant probability measure with smooth density; see, for instance, \cite{viana97}.

Consider the  Markov
process ${\bm X}$  given by the random perturbation of $f$ as follows
\begin{equation}\label{exp1_per}
  X_{n+1} =f(X_{n}) + \epsilon \zeta_{n} \, (\mbox{mod } 1),
\end{equation}
where $\{\zeta_{n}\}$ are i.i.d. standard normal
random variables, and $\epsilon$ is the noise magnitude. 
In our simulations, we run {\bf Algorithm 2} with $N = 10^{8}$ samples
and collect the coupling times. For all the examples throughout this section,  the threshold $d$ of triggering the
maximal coupling is set as $2\epsilon$ because $\epsilon \zeta_{t}$ has a standard deviation $\epsilon$.   When the maximal coupling is
triggered, we compare the probability density function on the line $\mathbb{R}$
and then fold back to $\mathbb{S}^{1}$. Theoretically, this is smaller than the ``true maximal coupling'' for which the
coupling probability should add up all  the periodic images. However, it makes little difference here since $\epsilon\ll1.$

In Figure \ref{fig1}, the $\mathbb{P}[\tau_{c} > n]$ versus $n$ plots are demonstrated in the log-linear plot, where the noise magnitudes $\epsilon$ are chosen to
be $0.01,$ $0.02,$ $0.04,$ $0.06,$ $0.08,$ $0.1$ and $0.12$, respectively.  We
see that the coupling time distribution has exponential tails which gives the rate
of geometric ergodicity. Slopes of those exponential tails are obtained by fitting $\log
\mathbb{P}[\tau_{c} > n]$ versus $n$ using a linear function. The negative slope of the exponential tails
versus $\epsilon$ is demonstrated in the lower right panel of Figure
\ref{fig1}). It drop linearly with respect to the noise magnitude.  (Note that the log-linear plot uses the logarithm with
base $10$. Hence, the slopes of curves in Figure \ref{fig1} Left and Middle
are the corresponding outputs of {\bf Algorithm 2} multiplied by $(\log
10)^{-1}$. This applies to all numerical examples in this paper.) This is
expected because the threshold to trigger the maximal coupling is $2\epsilon$. 
Two trajectories need to be
$O(\epsilon)$-close in order to couple. If we assume that the trajectory of $f$ is
well-mixed, heuristically two trajectories should take
$O(\epsilon^{-1})$ time to be $O(\epsilon)$ close to each other.

\begin{figure}[htbp]
\centerline{\includegraphics[width = \linewidth]{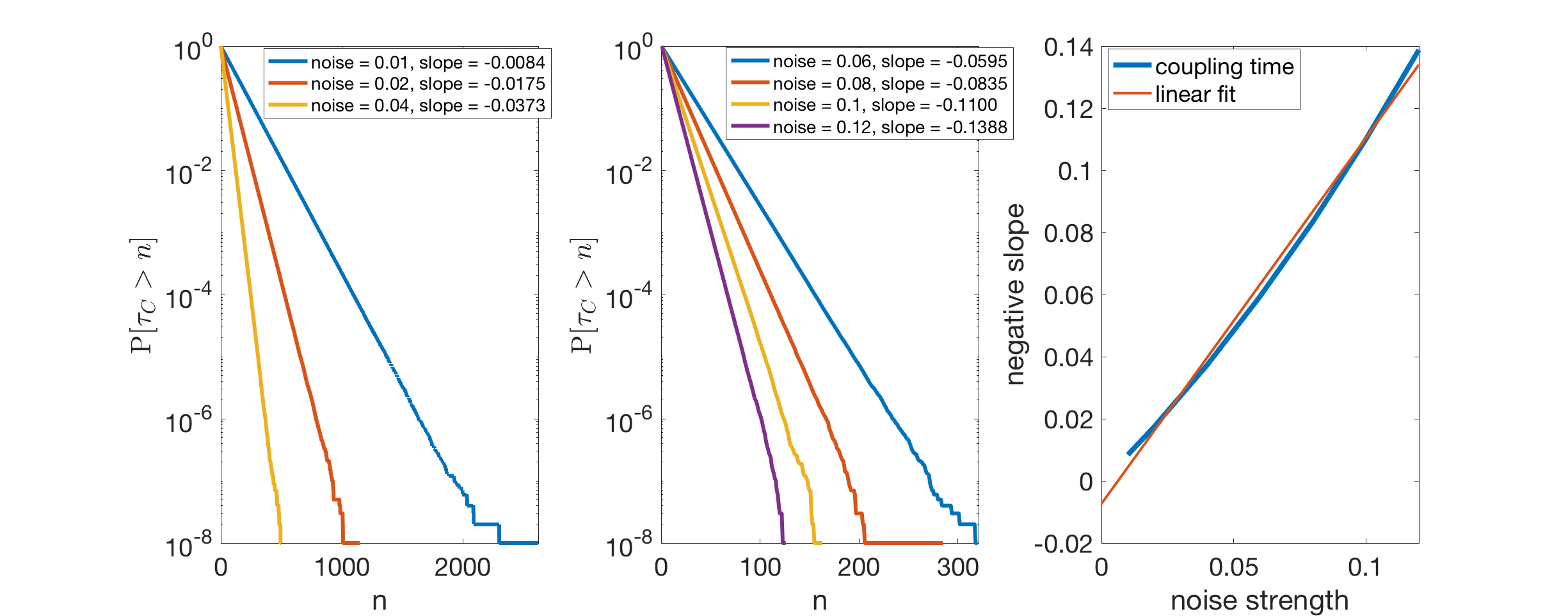}}
\caption{ Example in Section 4.1.  Left and Middle: 
The $\mathbb{P}[\tau_{c} > n]$ vs. $n$ with
 different noise magnitudes. Right: The negative slope of the exponential tail  vs. noise magnitudes, and the linear fit.}
\label{fig1}
\end{figure}

\subsection{Circle maps with neutral fixed point}
\label{neutral}
The second example is a circle map with a neutral fixed point. Consider 
$$
  f(x) = \left \{ 
\begin{array}{ll}
x + 2^{\alpha}x^{1+\alpha}, & 0 \leq x \leq \frac{1}{2};\\
2x - 1, &\frac{1}{2} < x < 1,
\end{array}
\right .
$$
where $0 < \alpha < 1$ is a parameter. Note  that $|f'|\ge1$ on $[0,1],$ 
and $|f'| = 1$ is achieved only at  $x = 0,$ i.e., 
$x=0$ is the (unique) neutral
fixed point. Thus, $f$ is not necessarily   
exponentially mixing.  In fact,  it has been shown that in this example, $f$
has the power-law mixing rate
$n^{1 - 1/\alpha}$ \cite{young1999recurrence}. 

Now, we consider the small random perturbation of $f$ given by  the Markov process ${\bm X}$ as follows
\[
   X_{n+1} =f(X_{n}) + \epsilon \zeta_{n} \, (\mbox{mod } 1),
\]
where $\{\zeta_{n}\}$ are i.i.d. standard normal random variables. 
Still, we  run {\bf Algorithm 2} with $N = 10^{8}$ samples and collect all the coupling times to compute the rate of geometric ergodicity. Noise magnitudes $\epsilon$ are chosen  the same as in Section 4.1. The $\mathbb{P}[\tau_{c} > n]$ versus $n$  are demonstrate in the log-linear plot in Figure \ref{fig2} (the left and middle panel). We see that the coupling time distribution still admits exponential tails, the slope of which
versus $\epsilon$ is computed and plotted in Figure \ref{fig2} (the right panel). Note  that despite a slower mixing rate (polynomial) of $f$,  the slope of the
exponential tail still drops linearly with respect to the noise magnitude, which is
same as the exponential mixing example in Section 4.1. This is because the slow mixing of $f$ is due to a longer return time from the very small neighborhood of the unique neutral fixed point $x=0$. A very small noise is already sufficient to ``shake'' the trajectories away from the neutral fixed point
to maintain a suitable mixing rate. Hence, the effect of slower-mixing
rate is hard to be observed unless the noise term becomes extremely
small. We refer to \cite{blumenthal2017lyapunov, blumenthal2018lyapunov}
for more recent theoretical results of similar maps with very small random perturbation.   

\begin{figure}[htbp]
\centerline{\includegraphics[width = \linewidth]{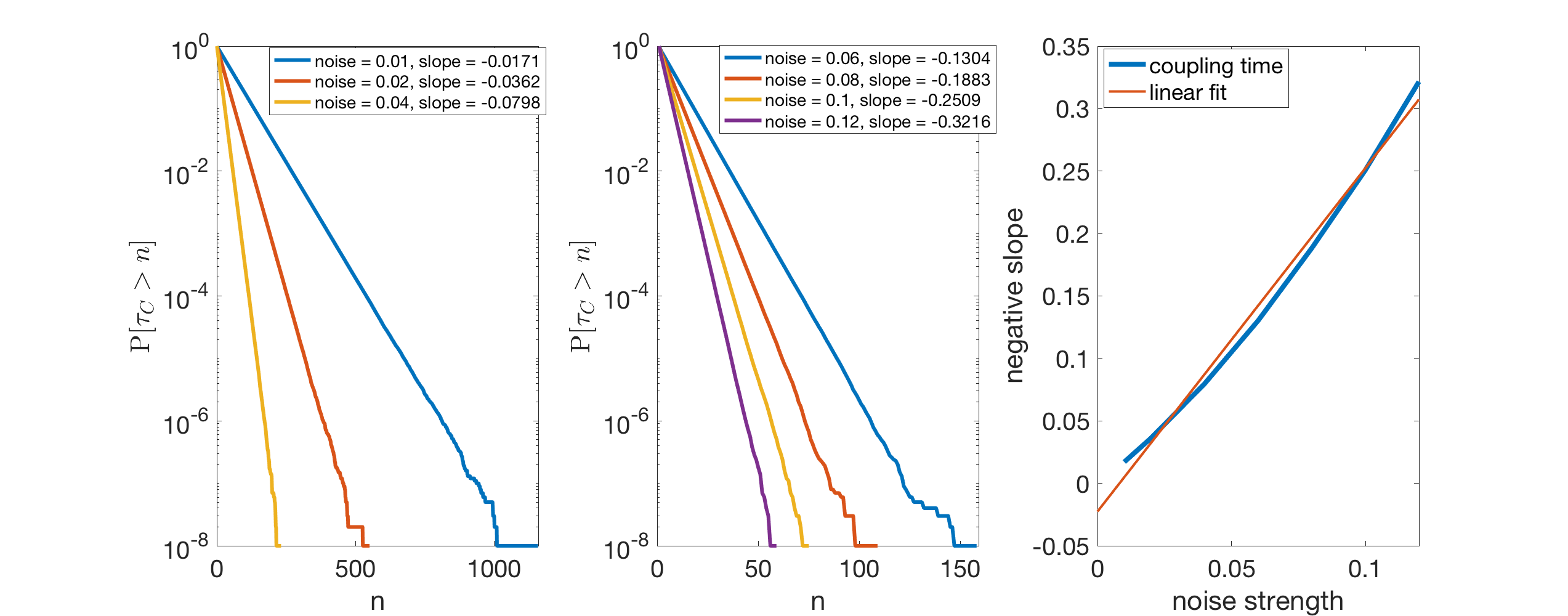}}
\caption{ Example in Section 4.2. Left and Middle:
  $\mathbb{P}[\tau_{c} > n]$ vs. $n$ with
 different noise magnitudes. Right:  The negative slope of the exponential tail  vs. noise magnitudes, and the linear fit.}
\label{fig2}
\end{figure}

\subsection{Irrational rotation (quasi-periodic)}
\label{quasi}
The third example is the irrational rotation on $\mathbb S^1$
\begin{eqnarray}\label{irrational}
 f(x) = x + \sqrt{2} \,(\mbox{mod } 1).
\end{eqnarray}
 Distinct from the previous two examples, for the irrational rotation \eqref{irrational},  there is NO  any stretching for the map $f$ (since $|f'|\equiv1$).  
Also,  every orbit of $f$ is dense going almost everywhere on $\mathbb S^1.$ Thus, $f$ is ergodic but not mixing. 

 Now, we consider the Markov process ${\bm X}$ given by 
$$
  X_{n+1} =f(X_{n}) + \epsilon \zeta_{n} \, (\mbox{mod } 1),
$$
where $\{\zeta_{n}\}$ are i.i.d. standard normal random variables. 
Still, the rate of geometric ergodicity are computed  by  running {\bf Algorithm 2} with $N = 10^{8}$ samples under different noise magnitudes
(Here, $\epsilon$ are chosen the same as
the previous two examples).
The $\mathbb{P}[\tau_{c} > n]$ versus $n$ plots are demonstrated
in the log-linear plot in Figure \ref{fig3}. We see that the coupling time distributions still exhibit exponential tails as the  the previous two examples.  However, in this example,  the slope (of the
exponential tail)  versus $\epsilon$ curve  drops super-linearly, instead of linearly as in the previous two examples,  as the noise magnitude decreases. We fit it by a quadratic polynomial function fairly well; see  the  right panel in Figure \ref{fig3}.
The heuristic reason for the $O(\epsilon^{2})$ slope is
the following. Without mixing, the only force that brings two
trajectories together is the diffusion, which takes $O(\epsilon^{-2})$
time to move $O(1)$ distance. Hence, one can expect two trajectories to be
``well mixed'' after $O(\epsilon^{-2})$ time.

\begin{figure}[htbp]
\centerline{\includegraphics[width = 1.2\linewidth]{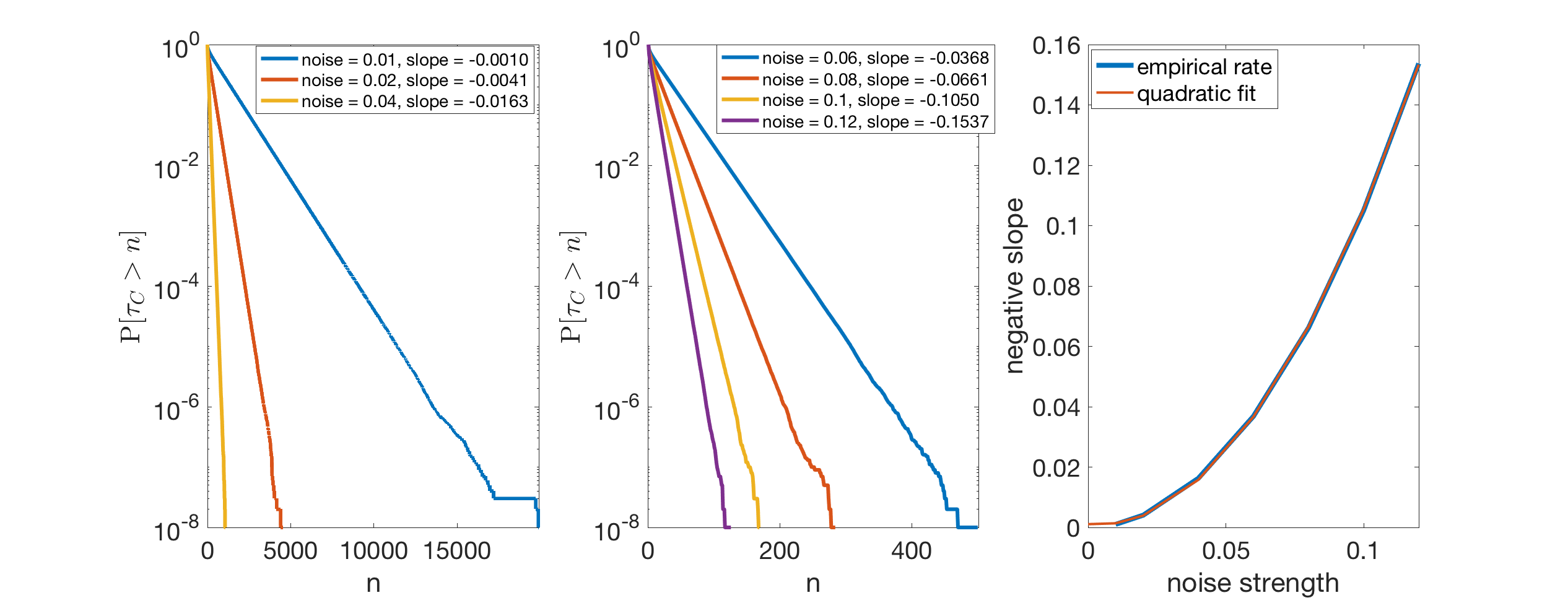}}
\caption{Example in Section 4.3. Left and Middle: $\mathbb{P}[\tau_{c} > n]$ vs. $n$ with
  different noise magnitudes. Right:  The negative slope of the exponential tail  vs. noise magnitudes, and the quadratic fit.}
\label{fig3}
\end{figure}

\subsection{Logistic map with stable periodic orbit}
\label{periodic}
The last example is from the logistic family 
\begin{eqnarray*}
f_{\lambda}=\lambda x(1-x)\  (\mbox{mod } 1) 
\end{eqnarray*}
where $0\le \lambda\le4. $ The logistic map was introduced as a demographic model  \cite{ausloos2006logistic} and 
has been well studied since then for its  manipulability and abundant dynamical phenomena. 
It has been known that 
for $\lambda$ between 2 and 3.56995 (approximately), the dynamics of $f_\lambda$ is simple. There is a periodic orbit, for which the period  doubles as  $\lambda$ increases,  attracting all the other trajectories.  However, for a typical  $\lambda$ beyond the critical value 3.5699,  the dynamics of  $f_\lambda$  goes into a chaotic regime.  
Any two trajectories will diverge no matter how close initially they are.
In this example, we choose the logistic map \begin{eqnarray*}
f:=f_{3.2}=3.2x(1-x)\  (\mbox{mod } 1).
\end{eqnarray*}
which admits a
2-periodic orbit $PQPQPQ\cdots,$ where $P=0.7995, Q=0.5130$ (approximately),  that attracts all the initial values in $(0,1)$.

Now,  we consider the Markov chain ${\bm X}$
\[
  X_{n+1} =f(X_{n}) + \epsilon \zeta_{n} \, (\mbox{mod } 1),
\]
where $\{\zeta_{n}\}$ are i.i.d. standard normal random variables. 
Still, we compute the rate of geometric ergodicity of ${\bm X}$ with different
noise magnitudes by running {\bf Algorithm 2} with $N = 10^{8}$
samples trajectories. A little bit different from the previous three examples, the noise magnitudes in this example are
chosen as $0.015,$ $0.02,$ $0.03,$ $0.04,$
$0.06,$ $0.08,$ and $0.1$, respectively. This is because in this example, the coupling is extremely slow which is hard to be observed numerically if the noise is too small. Slopes of the exponential tails of the
coupling times  are computed and demonstrated in Figure \ref{fig4} (blue lines). The lower right panel in Figure \ref{fig4} shows that the coupling becomes exponentially slow as the noise vanishes.  This is because the
trajectories start from the basin of the different periodic sequences $PQPQP\cdots$ and $QPQPQ\cdots$ need to ``overcome the attraction" from the corresponding periodic sequence  in order to meet.

 In addition to the lower bound, for this example, we also compute
the upper bound of the rate of geometric ergodicity through the first exit time. By transparent calculations, one finds that the basin of attraction of the periodic sequences
$PQPQPQ\cdots$ and  $QPQPQP\cdots$ are 
\[
  A = [0.110 \, , \, 0.312] \cup [0.688 \, , \, 0.890]\cup\cdots \quad  {\text{and}} \quad B
  = [0.313 \,, \, 0.688]\cup\cdots,
\]
respectively, i.e., a deterministic trajectory starts from $A$  converges to the
periodic sequence $PQPQPQ \cdots$, and  a deterministic trajectory
starts from $B$ converges to the periodic sequence $QPQPQP\cdots$. For each value of $\epsilon$ chosen above, we compute the first exit time $\eta_{PQ}$ of the coupling 
$({\bm X}, {\bm Y})$ starting from $A\times B$ as follows
\begin{align*}
  \eta_{PQ} &= \min \left \{ \inf_{n \geq 0} \{n \,|\, \mathcal{X}_{n} \notin A, n \mbox{ even, or
  } \mathcal{X}_{n} \notin B, n \mbox{ odd }\} , \right. \\
& \left . \inf_{n \geq 0} \{n \,|\, \mathcal{Y}_{n} \notin B, n \mbox{ even, or
  } \mathcal{Y}_{n} \notin A, n \mbox{ odd }\} \right \}.
\end{align*}
The Log-linear plots of $\mathbb{P}[ \eta_{PQ} >
n]$ versus $n$ are also demonstrated in Figure \ref{fig4} (red
lines). Still, we run $10^{8}$ samples. We see that  when $\epsilon$
is small, the distribution of the first exit time is also
exponentially small as the coupling times. By Proposition \ref{upperbound},
this gives an upper bound for the rate of geometric ergodicity.

  If let $S(\epsilon)$ and $\hat S(\epsilon)$ be the slopes of the exponential tails of the coupling time and first exit time under the $\epsilon$-noise perturbation,  then the large deviation theory tells  that 
$\epsilon^{2}\log(-\hat S(\epsilon))$ converges to a finite limit
as $\epsilon$ vanishes \cite{freidlin1998random}. This is confirmed by our numerical
simulations in Figure \ref{fig4} Lower Right (red crosses). In addition, the term $\epsilon^{2}\log(- {S}(\epsilon))$ also converges to a finite limit as well (blue dots). 

 \begin{figure}[htbp]
\centerline{\includegraphics[width = 1.2\linewidth]{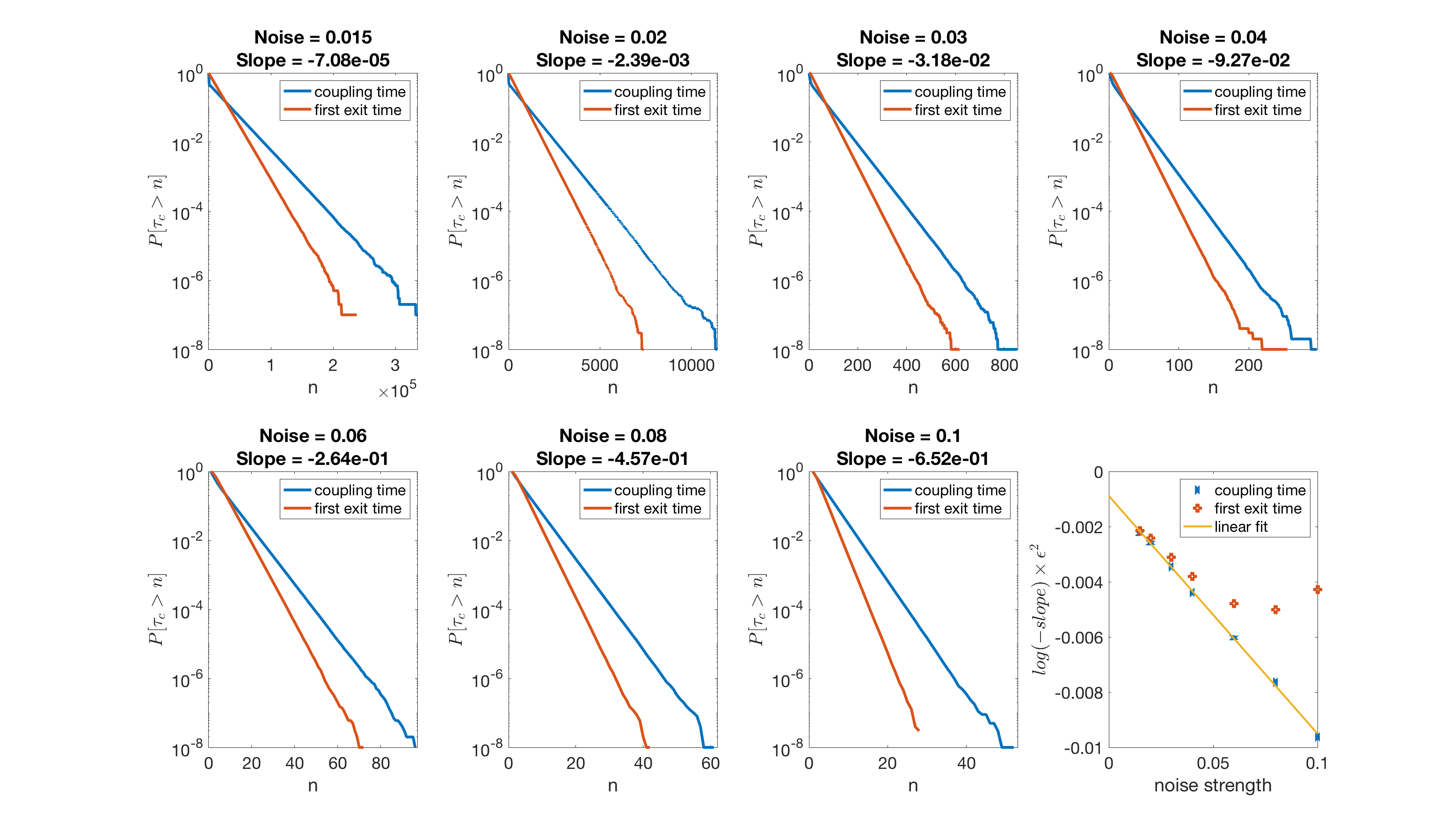}}
\caption{Example in Section 4.4. First 7 panels: Blue line:  $\mathbb{P}[\tau_{c} > n]$ vs. $n$ with different noise magnitudes. Red line:  $\log\mathbb{P}[\eta_{PQ} > n]$ vs. $n$ with
  different noise magnitudes. Lower right panel: Linear extrapolation of
  $\epsilon^{2}\log(- S(\epsilon))$ and $\epsilon^{2}\log(-
  \hat{S}(\epsilon))$, where $S(\epsilon)$ and $\hat S(\epsilon)$ are the slopes of exponential tail of
 the coupling time and the first exit time with respect to the noise magnitude $\epsilon,$ respectively.
}
\label{fig4}
\end{figure}

\section{Geometric ergodicity of stochastic differential equations}

\subsection{Numerical and analytical coupling times}
For SDEs, the first issue to address is the impact of numerical
approximations. As we know,  the numerical trajectories $\bar{X}_{t}$ of an SDE are only  approximations of the true trajectories $X_{t}$. Although the independent/synchronous/reflection coupling methods introduced in Section 3.1 can be analogously applied to the
 SDE setting \eqref{SDE},  the analytical
coupling mechanisms are different from the numerical ones. Two trajectories of
$X_{t}$ are coupled whenever they meet, without the need to trigger a maximal
coupling one step earlier. Such a difference makes the direct comparison of coupling times between $X_{t}$ and $\bar X_t$ difficult, if not impossible. To solve this,  for the time-$h$ sample chain of the true SDE, we apply the numerical coupling strategy as well, i.e., the maximal coupling is triggered when two trajectories are close to
each other. This enables us to compare the coupling times under the same
coupling mechanisms.

Now, applying the coupling strategy in {\bf Algorithm 1}, 
we construct a coupling
$({\bm X}^h, {\bm Y}^h)$ of the time-$h$ chain of \eqref{SDE} as follows:

\begin{itemize}
  \item[(i)]  When the maximal coupling is not triggered, $(\mathcal{X}^h_{n},
  \mathcal{Y}^h_{n})$  evolves according to the same coupling method
  (independent, reflection, or synchronous) as the one used by the numerical coupling  $(
  \mathcal{\bar{X}}^h_{n}, \mathcal{\bar{Y}}^h_{n})$
 for $n = 0, 1, \cdots;$
 
\item[(ii)] At each $t = nh, n\ge0$,
  check the distance between $\mathcal{X}^h_{n}$ and
  $\mathcal{Y}^h_{n}.$ Trigger the maximal coupling if and only
  if $| \mathcal{X}^h_{n} - \mathcal{Y}^h_{n} | < d$, where $d$ is the
  same threshold as in {\bf Algorithm 1};
  
\item[(iii)]If the maximal coupling is triggered at $t = nh$, perform the maximal
coupling with respect to the probability distribution of
$\mathcal{X}^h_{(n+1)}$ and $\mathcal{Y}^h_{(n+1)},$ respectively\footnote{If at step (ii), we already have $\mathcal{X}^h_{n} = \mathcal{Y}^h_{n}$. Then we just set 
	$\tau_c = nh,$ and the step (iii) will not be implemented. However, for
	strong Feller processes, this happens with zero probability.}.
\end{itemize}

\medskip

It is easy to see that $({\bm X}^h, {\bm Y}^h)$ is a coupling of the time-$h$
sample chain of the SDE \eqref{SDE}. 
We further assume the following for $({\bm X}^h, {\bm Y}^h)$ and $(\bar{\bm X}^h, \bar{\bm Y}^h)$, respectively. 
\begin{itemize}
  \item[\bf{(S1)}]  The numerical scheme used in {\bf Algorithm 1} is a strong approximation. More precisely,  for any finite $t>0,$ there exists a constant $C(t)>0$ such that 
\[
  \mathbb{P}[ | \bar{X}^{h}_{i} - X^{h}_{i} | > h^{p}] \leq C(t) h^{1 + \alpha},\quad i=0,1,...,\lfloor t/h\rfloor+1
\]
holds for some $p > 1/2$, $\alpha>0,$  and all sufficiently small $h > 0;$

\medskip

\item[\bf{(S2)}] For each $\boldsymbol z := \boldsymbol x - \boldsymbol y$, the probability density function of
  $Z := \mathcal{X}^{h}_{1} - \mathcal{Y}^{h}_{1}$
  ({\it resp.} $\bar{Z} := \mathcal{\bar{X}}^{h}_{1} - \mathcal{\bar{Y}}^{h}_{1}$)  given
  $\mathcal{X}^{h}_{0} = \boldsymbol x, \mathcal{Y}^{h}_{0} = \boldsymbol y$
  ({\it resp.} $\mathcal{\bar{X}}^{h}_{0} = \boldsymbol x, \mathcal{\bar{Y}}^{h}_{0} =
  \boldsymbol y$ ), denoted by $p_{\boldsymbol z}(Z)$ ({\it resp.} $\bar{p}_{\boldsymbol z}( \bar{Z})$), satisfies 
\[
  C_{b}h^{-k/2} e^{-(Z -\boldsymbol z)^\top \Sigma_{b}(Z - \boldsymbol z)/h}  \leq p_{\boldsymbol z}(Z) \leq
  C_{u}h^{-k/2} e^{-(Z - \boldsymbol z)^\top \Sigma_{u}(Z - \boldsymbol z)/h}
\]
\[({\it resp.}\quad
  \bar{C}_{b}h^{-k/2} e^{-(\bar{Z} - \boldsymbol z)^\top \bar{\Sigma}_{b}(\bar{Z} - \boldsymbol z)/h}  \leq \bar{p}_{\boldsymbol z}(\bar{Z}) \leq
  \bar{C}_{u}h^{-k/2} e^{-(\bar{Z} - \boldsymbol z)^\top \bar{\Sigma}_{u}(\bar{Z} - \boldsymbol z)/h} ),
\]
where $\Sigma_{u}, \Sigma_{b}$
({\it resp.}$\bar{\Sigma}_{u}, \bar{\Sigma}_{b}$) are positive definite $k\times k$ matrices,  and 
$C_{u}, C_{b}>0$ ({\it resp.} $\bar{C}_{u}, \bar{C}_{b}>0$) are constants in order $O(1)$;

\medskip

\item[\bf{(S3)}] The threshold $d$ to trigger the maximal coupling is in order $O(\sqrt h).$ To be specific, we set $d=2\epsilon\sqrt{h},$ where $\epsilon$ is 
the noise magnitude in \eqref{SDE}; 

\medskip

 \item[\bf{(S4)}] The probability density function of $X^{h}_{1}$ ({\it resp.} $\bar{X}^{h}_{1}$)
  conditioning on $X^{h}_{0} = \boldsymbol x$ ({\it resp.} $\bar{X}^{h}_{0} = \boldsymbol x$),
denoted by $f^{h}_{\boldsymbol x}$ ({\it resp.} $\bar{f}^{h}_{\boldsymbol x}$), changes continuously with respect to $h.$ More precisely, there exists a function $\varphi : \mathbb{R}_+ \to \mathbb{R}_+$ satisfying $\lim_{h\to0}\varphi(h)=0$ 
  such  that for all $\gamma > 1/2$ and the unit vector $\boldsymbol{v}\in\mathbb R^k,$ it holds that 
\[
  \| f^{h}_{\boldsymbol x} -  f^{h}_{\boldsymbol x + h^{\gamma} \boldsymbol{v}} \|_{L^{1}}
 <\varphi(h) \quad  (\mbox{resp. } \| \bar{f}^{h}_{\boldsymbol x} - \bar{f}^{h}_{\boldsymbol x +
   h^{\gamma} \boldsymbol{v}} \|_{L^{1}}
 <\varphi(h) ).
\]
In addition, the one-step transition
probability density function $\bar{f}^{h}_{\boldsymbol x}$ approximates
$f^{h}_{\boldsymbol x}$ in the $L^{1}$-norm, i.e.,
\[ \| f^{h}_{\boldsymbol x} - \bar f^{h}_{\boldsymbol x} \|_{L^{1}}
<\varphi(h),\quad \forall \boldsymbol x\in\mathbb R^k.\]
\end{itemize}
Also, we require that the two coupling processes $( \mathcal{X}_{t}, \mathcal{Y}_{t})$ and $( \mathcal{\bar{X}}_{t},
\mathcal{\bar{Y}}_{t})$ 
use the same Brownian motion as the true SDE
trajectory to produce the discrete random variables. This makes the comparison  between the numerical
and true SDE trajectories possible. 

Essentially,   {\bf (S1)}--{\bf (S4)}  assume that (i) $\bar{\bm X}^h$ is a
strong approximation of ${\bm X}^h$ with a good control of the tails;  (ii) the probability density function of $X^{h}_{1}$
given $X^{h}_{0}$ ({\it resp.} $\bar{X}^{h}_{1}$
given $\bar{X}^{h}_{0}$) is a good approximation of a Gaussian function
with variance $O(h).$  
 We  justify the assumptions {\bf (S1)}--{\bf (S4)} by numerical computations or the theoretical arguments. Please see the following Remark \ref{rem}.
\begin{rem}\label{rem}
Assumptions {\bf (S2)} and {\bf (S4)} are both about the transition
probability density function. When the Euler-Maruyama scheme is used,
$\bar{p}_{\boldsymbol z}$ and $\bar{f}_{\boldsymbol x}^{h}$ are probability density functions
of normal distributions, and for $h$  sufficiently small, $p_{\boldsymbol z}$ and $f^h_{\boldsymbol x}$ are also closely approximated by the normal probability density functions. Hence, {\bf (S2)} and {\bf (S4)} are reasonable assumptions. In particularly,
 if $g$ and $\sigma$ in
\eqref{SDE} are constants, {\bf (S2)} holds easily, as well as the first inequality 
in {\bf (S4)}.
For  the second inequality in {\bf (S4)}, it is not easy to integrate $|\bar{f}^{h}_{\boldsymbol x} - \bar{f}^{h}_{\boldsymbol x + h^{\gamma} \boldsymbol{v}}|$ by
hand, which is equivalent to integrating
\[
  \int_{\mathbb{R}^{d}} (2 \pi)^{-d/2} \mathrm{det}(h \Sigma)^{-1/2}
  |e^{-\boldsymbol{x}^{T}(h\Sigma)^{-1} \boldsymbol{x} /2} 
 -   e^{-(\boldsymbol{x} - h^{\gamma}\boldsymbol{v})^{T}(h\Sigma )^{-1}
   (\boldsymbol{x} - h^{\gamma} \boldsymbol{v})/2 }| \mathrm{d}\boldsymbol{x},
\]
where $\Sigma = \sigma(\boldsymbol x)$. We numerically compute the $L^{1}$ distance
between the probability density function of $N(0, h)$ and  $N(h^{0.8}, h)$
for different small $h$, and plotted in Figure \ref{consistent}  Left. The $L^{1}$ distance converges to zero at
a power-law speed with respect to $h$.  This verifies {\bf (S4)}.

For {\bf (S1)}, we
numerically compare the strong error of Milstein scheme for the
geometric Brownian motion 
$\mathrm{d}X_{t} = 0.2 X_{t} \mathrm{d}t +
X_{t}\mathrm{d}W_{t}$ at $t = 1$. The ratio
$\mathbb{P}[|\bar{X}_{1} - X_{1}| > h^{0.55}]/h^{1.1}$ for different
$h$ is plotted in Figure \ref{consistent} Left. We see that the
ratio decreases with respect to $h$. Hence {\bf (S1)} is satisfied. 

In practice, the  threshold $d$ in {\bf (S3)} can be set as $d=C\sqrt{h}$ with $C$ being in the same scale as $\epsilon$ (here, we choose $C=2\epsilon$). In this way, the two trajectories can be coupled with certain reasonable  probability once the maximal coupling is triggered. Our
numerical study finds that the numerical coupling time is not very sensitive against $C$.
\begin{figure}[H]
\centerline{\includegraphics[width = 0.75\linewidth]{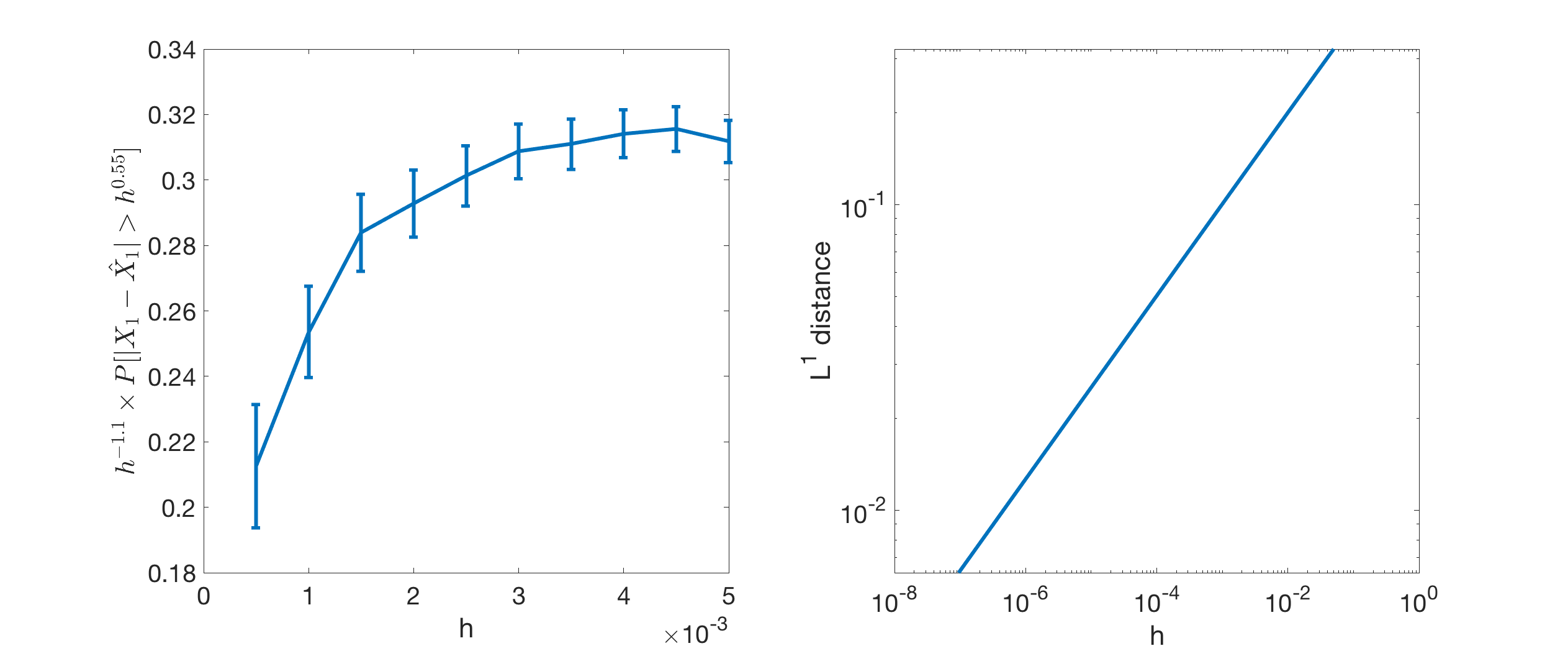}}
\caption{Left: $\mathbb{P}[|\bar{X}_{1} - X_{1}| > h^{0.55}]/h^{1.1}$ for different
$h,$ where $X_{t}$ is a geometric Brownian
motion. Error bars are included. Right: $L^{1}$ distance between the probability density function of $N(0, h)$ and that of $N(h^{0.8}, h)$ for different small $h$. }
\label{consistent}
\end{figure}

\end{rem}

\begin{thm}\label{compare}
Let $\tau_{c}$ and $\bar{\tau}_{c}$ be the coupling times of
$({\bm X}, {\bm Y})$ and $(\bar{\bm X}, \bar{\bm Y})$, respectively. Assume that {\bf (S1)}-- {\bf (S4)} hold. Then for any finite $t>0,$ there exists $a(t)>0$ such that for any $b\in(0,p-\frac{1}{2})$ and any $h>0$ sufficiently small, it holds that 
\begin{eqnarray}\label{difference}
\big|\mathbb{P}[ \tau_c > t]-
\mathbb{P}[\bar \tau_c > t]\big|\le a(t)h^\alpha+c\varphi(h)+
h^{p-\frac{1}{2} -b},
\end{eqnarray}
where $c>0$ is a uniform constant for all small $h>0,$ and the parameters $\alpha,p$ are as in {\bf (S1)}.
In particularly, \[\lim_{h\to0}\mathbb{P}[ \bar\tau_c > t]=
\mathbb{P}[\tau_c > t].\] 
\end{thm}

By Theorem \ref{compare}, if an extrapolation of small $h$ shows that the exponential tail of the
coupling time of $(\bar{\bm X}^h, \bar{\bm Y}^h)$ is strictly away from zero, then  the numerical coupling
provides a lower bound of the geometric convergence/contraction rate of
the SDE \eqref{SDE}. 

In the remainder of this section we prove 
Theorem \ref{compare}. 
Before proceeding to the proof, we briefly describe the idea of it. 
Observe that at each step $i,$ if the coupling succeeds for $(\mathcal X^h_{i}, \mathcal Y^h_{i})$,  then at the previous step $(i-1)$, $\mathcal X^h_{i-1}$ and $\mathcal Y^h_{i-1}$ must be sufficiently close so that the maximal coupling is triggered.
The strong approximation property {\bf (S1)} then  guarantees that at the step $(i-1),$ very likely, the maximal coupling is also  triggered  for the numerical coupling $(\bar{\mathcal X}^h_{n}, \bar{\mathcal Y}^h_{n})$. 
 In other words, the maximal coupling is triggered for one coupling process   while not for the other can only happen  with small probability.  
The events $ \mathrsfs{A}_{i-1},  \mathrsfs{\bar A}_{i-1},  \mathrsfs{B}_{i-1}$ and $ \mathrsfs{C}_{i-1}$ defined below 
as well as Proposition \ref{prop:events} are to indicate this. 
Moreover, whenever the maximal coupling is triggered, as long as $\mathcal X^h_{i-1}, \bar{\mathcal X}^h_{i-1}$ and $\mathcal Y^h_{i-1}, \bar{\mathcal Y}^h_{i-1}$ are both close, the probabilities to achieve a successful coupling at the next step are about the same. Lemma \ref{lem:estimate1}  is to establish this.  Although  there are situations when $|\mathcal X^h_n- \mathcal Y^h_n|$ ({\it resp.} $|\bar{\mathcal X}^h_n-\bar{\mathcal Y}^h_n|$) falls at the ``edge" of the triggering area, the probabilities are small as  stated by Lemma \ref{lem:estimate}.

\bigskip

\noindent{\it Proof of Theorem \ref{compare}:}
For convenience,  write $t = n_hh$ where $n_h=\lceil\frac{t}{h}\rceil.$ Recall that we use $n_c$ (resp. $\bar n_c$) to denote the numerical steps for a successful coupling 
for the time-$h$ chain ${\bm X^h}$ (resp. ${\bm{\bar X}^h}$), where $\tau_c=hn_c$ (resp. $\bar\tau_c=h\bar n_c$).
Then 
\begin{align*}
\big|\mathbb{P}[ \tau_c > t]-
\mathbb{P}[\bar \tau_c > t]\big|
= \big| \mathbb{P}[ n_{c} > n_h] - \mathbb{P}[\bar{n}_{c} > n_h] \big|
= \big|\mathbb{P}[ n_{c} \leq n_h] - \mathbb{P}[\bar{n}_{c} \leq n_h]\big |.\nonumber
\end{align*}
Since the maximal coupling is triggered no early than the second step,  we have
\begin{eqnarray*}
\big|
\mathbb{P}[ n_{c} \leq n_h] - \mathbb{P}[\bar{n}_{c} \leq n_h]\big ||\le\sum_{i=2}^{n_h}\big||\mathbb{P}[ n_{c} = i] - \mathbb{P}[ \bar{n}_{c} = i]\big|.
\end{eqnarray*}

As discussed above,  for each $2\le i\le n_h,$ if the coupling between  $\mathcal{\bar{X}}^{h}_{n}$ and $\mathcal{\bar{Y}}^{h}_{n}$ occurs at step $i$,
then at  step $(i-1),$  besides that the maximal coupling of $(\mathcal X^h_n, \mathcal Y^h_n)$ must be  triggered, 
the maximal coupling of 
$(\bar{\mathcal X}^h_n, \bar{\mathcal Y}^h_n)$ is  (very likely) triggered as well. 
To clarify this, we  split each term  $\Big(\mathbb{P}[ n_{c} = i] - \mathbb{P}[ \bar{n}_{c} = i]\Big)$  according to whether the coupling process at the step $(i-1)$ falls at the  ``edge'' of the triggering area. 
The following several events are defined according to this.
Fix $\delta \in(0, \frac{p-\frac{1}{2}}{3}).$ Let 
\[
\mathrsfs A_{i-1} = \Big\{ |\mathcal{X}^{h}_{i-1} - \mathcal{Y}^{h}_{i-1}| < d - h^{p -
	\delta},n_c>i-1 \Big\},
\]
\[
\mathrsfs {\bar{A}}_{i-1} = \Big\{ |\mathcal{\bar{X}}^{h}_{i-1} - \mathcal{\bar{Y}}^{h}_{i-1}| < d - h^{p -
	\delta},\bar n_c>i-1  \Big\},
\]
\[
 \mathrsfs B_{i-1}= \Big\{| \mathcal{\bar{X}}^{h}_{i-1} -
\mathcal{X}^{h}_{i-1}| \leq \frac{1}{2}h^{p - \delta}\,, | \mathcal{\bar{Y}}^{h}_{i-1} -
\mathcal{Y}^{h}_{i-1}| \leq \frac{1}{2}h^{p - \delta}, n_c >i-1, \bar n_c>i-1\Big\},
\]
\begin{align*}
 \mathrsfs C_{i-1}&=\Big\{ |\mathcal{X}^{h}_{i-1} - \mathcal{Y}^{h}_{i-1}| < d  \, ,
|\mathcal{\bar{X}}^{h}_{i-1} - \mathcal{\bar{Y}}^{h}_{i-1}| < d, 
| \mathcal{\bar{X}}^{h}_{i-1} -
\mathcal{X}^{h}_{i-1}| \leq \frac{1}{2}h^{p - \delta}\,,\\ &| \mathcal{\bar{Y}}^{h}_{i-1} -
\mathcal{Y}^{h}_{i-1}| \leq \frac{1}{2}h^{p - \delta},n_c>i-1,\bar n_c>i-1  \Big\},
\end{align*} 
where $p>1/2,d=2\epsilon\sqrt{h}$ are  from {\bf (S1)} and {\bf (S3)}, respectively. 
Note that 
the occurrence of both  events $ \mathrsfs A_{i-1}$ (resp.  $\mathrsfs{\bar A}_{i-1}$) and $ \mathrsfs B_{i-1}$ induces  the occurrence of the event $\mathrsfs C_{i-1}$, i.e., 
\[\mathrsfs A_{i-1}\backslash \mathrsfs C_{i-1}\subseteq \mathrsfs B^c_{i-1}\quad (resp.\quad \mathrsfs {\bar A}_{i-1}\backslash\mathrsfs   C_{i-1}\subseteq \mathrsfs B^c_{i-1}).\] 
Combined with the strong approximation property {\bf (S1)}, we immediately obtain  the following estimates.
\begin{prop}\label{prop:events}
For each $2\le i\le n_h,$ it holds that
\begin{eqnarray*}
\mathbb{P}[n_{c} = i, \mathrsfs A_{i-1} \setminus \mathrsfs C_{i-1}] \leq 2C(t)h^{1 +{\alpha}}\ (resp. \quad \mathbb{P}[\bar{n}_{c} = i, \mathrsfs{\bar{A}}_{i-1} \setminus \mathrsfs C_{i-1}]\leq  2C(t)h^{1 +{\alpha}}),
\end{eqnarray*}
where $C(t)$ is from {\bf (S1)}.
\end{prop}

\medskip

Now, for each $2\le i\le n_h,$ we split $\mathbb P[n_c=i]$ (resp. $\mathbb P[\bar n_c=i]$) as 
\begin{eqnarray*}
\mathbb{P}[n_{c}=i]
=\mathbb P [n_c=i, \mathrsfs C_{i-1}]+\mathbb{P}[n_{c} = i, \mathrsfs A_{i-1} \setminus\mathrsfs C_{i-1}]+\mathbb P[n_c=i, \mathrsfs A_{i-1}^c].
\end{eqnarray*}
(resp.  $\quad \mathbb{P}[\bar{n}_{c}=i] =\mathbb P [\bar{n}_c=i, \mathrsfs{\bar A}_{i-1}]+\mathbb{P}[\bar n_{c} = i, \mathrsfs{\bar A}_{i-1} \setminus\mathrsfs  C_{i-1}]+\mathbb P[\bar{n}_c=i, \mathrsfs{\bar A}_{i-1}^c]$).

By Proposition \ref{prop:events} we have 
\begin{eqnarray*}
|\mathbb{P}[ n_{c} = i] - \mathbb{P}[ \bar{n}_{c} = i]|
&\leq& | \mathbb{P}[ n_{c} = i , \mathrsfs C_{i-1}] - \mathbb{P}[ \bar{n}_{c} =
i, \mathrsfs C_{i-1}] | + 4C(t)h^{1+\alpha}\\
&+&|\mathbb P[n_c=i, \mathrsfs A_{i-1}^c]-\mathbb P[\bar{n}_c=i, \mathrsfs{\bar A}_{i-1}^c]|.
\end{eqnarray*}
Hence, the estimation of $|\mathbb{P}[ n_{c} = i] - \mathbb{P}[ \bar{n}_{c} = i]|$ 
is reduced to the estimations of 
\begin{eqnarray*}
| \mathbb{P}[ n_{c} = i ,\mathrsfs C_{i-1}] - \mathbb{P}[ \bar{n}_{c} =
i, \mathrsfs C_{i-1}] | 
\end{eqnarray*}
and 
\begin{eqnarray*}\label{sum2}
|\mathbb P[n_c=i, \mathrsfs A_{i-1}^c]-\mathbb P[\bar{n}_c=i, \mathrsfs{\bar A}_{i-1}^c]|.
\end{eqnarray*}
These are concluded  by  the following two lemmata.

\begin{lem}\label{lem:estimate1}
For each $2\le i\le n_h,$  it holds that 
\begin{eqnarray*}
|\mathbb P [n_c=i, \mathrsfs C_{i-1}]-\mathbb P [\bar{n}_c=i, \mathrsfs C_{i-1}]|\le c_0\varphi(h)\mathbb{P}[ n_{c} = i],
\end{eqnarray*}
 where $c_0>0$ is a uniform constant for all $i$ and small $h>0$.
\end{lem}

\begin{lem}\label{lem:estimate}
For each $2\le i\le n_h,$  the following  hold 
\begin{eqnarray}
\mathbb P[n_c=i, \mathrsfs A_{i-1}^c]&\le& c_1h^{p -
\frac{1}{2}  - 2\delta}\mathbb{P}[ n_{c} = i]\label{1}\\
\mathbb P[\bar{n}_c=i, \mathrsfs{\bar A}_{i-1}^c]&\le& c_1h^{p -
		\frac{1}{2}  - 2\delta}\mathbb{P}[ \bar{n}_{c} = i]\label{2},
\end{eqnarray}
where $c_1>0$ is a uniform constant for all $i$ and small $h>0$.
\end{lem}

\medskip

We postpone the proofs of Lemma \ref{lem:estimate1} and Lemma \ref{lem:estimate} to the end.
Combining all the estimates above, 
\begin{eqnarray*}
	& & | \mathbb{P}[n_{c} = i] - \mathbb{P}[ \bar{n}_{c} = i] | \\
	&\le&
	4C(t)h^{1+\alpha} + c_0\varphi(h) \mathbb{P}[n_{c} = i] + c_1h^{p
		- \frac{1}{2} - 2\delta}(\mathbb{P}[n_{c} = i]  +
	\mathbb{P}[\bar{n}_{c} = i]).
\end{eqnarray*}
Note that
$$
\sum_{i = 2}^{n_h}\mathbb{P}[n_{c} = i] \leq 1, \quad
\sum_{i = 2}^{n_h}\mathbb{P}[\bar{n}_{c} = i] \leq 1.
$$
Then together with $n_h$ being in the order $O(t/h),$
we finally obtain 
\begin{eqnarray*}
& &\sum_{i = 2}^{n_h} | \mathbb{P}[n_{c} = i] -
	\mathbb{P}[ \bar{n}_{c} = i] |\\
	&\le&
	4n_hC(t)h^{1+\alpha} + c_0\varphi(h)\sum_{i = 2}^{n_h} \mathbb{P}[n_{c} = i] + c_1h^{p
		- \frac{1}{2} - 2\delta}\sum_{i = 2}^{n_h}(\mathbb{P}[n_{c} = i]  +
	\mathbb{P}[\bar{n}_{c} = i] )\\
	&\le& \tilde C(t)h^\alpha+c_0\varphi(h)+2c_1
	h^{p-\frac{1}{2} - 2\delta},
 \end{eqnarray*}
where $\tilde C(t)>0$ only depend on $t.$ 

Now, Theorem \ref{compare} is proved by setting $a(t)=\tilde C(t)$ and $b=3\de.$

\bigskip

\noindent{\it Proof of Lemma \ref{lem:estimate1}:}
Since $(p-\delta)>1/2,$ if denote $f_{\boldsymbol x}, f_{\bar{\boldsymbol x}}$ ({\it resp.} $f_{\boldsymbol y}, \bar f_{\bar{\boldsymbol y} }$) as  the probability density functions of $\mathcal X^h_{i}, \bar{\mathcal X}^h_{i}$ ({\it resp.} $\mathcal Y^h_{i}, \bar{\mathcal Y}^h_{i}$) conditioning on $\mathcal X^h_{i-1}=\boldsymbol x, \bar{\mathcal X}^h_{i-1}=\bar{\boldsymbol x}$ (resp. $\mathcal Y^h_{i-1}=\boldsymbol y, \bar{\mathcal Y}^h_{i-1}=\bar{\boldsymbol y}$),  by  {\bf (S4)}, we have
\begin{eqnarray*}
	\|f_{\boldsymbol x}-\bar  f_{\bar{\boldsymbol x}}\|_{L^1}\le 2\varphi(h)\ (resp.\ \|f_{\boldsymbol y}-\bar  f_{\bar{\boldsymbol y}}\|_{L^1}\le 2\varphi(h)).
\end{eqnarray*}
Then the mechanism of the maximal coupling yields
\begin{eqnarray*}
	&\ &| \mathbb{P}[ n_{c} = i, \mathrsfs C_{i-1}] - \mathbb{P}[ \bar{n}_{c}=
	i, \mathrsfs C_{i-1}] |\\
	&=&| \mathbb{P}[ n_{c}
	= i | \mathrsfs C_{i-1}] - \mathbb{P}[ \bar{n}_{c} =
	i | \mathrsfs C_{i-1}] |\cdot \mathbb{P}[\mathrsfs C_{i-1}] \leq 4\varphi(h) \mathbb{P}[\mathrsfs C_{i-1}]. 
\end{eqnarray*}

 Note that as long as the maximal coupling is triggered, the coupling probability is in order $O(1)$ and uniform with respect to all small $h>0,$ i.e., 
\begin{eqnarray*}
\mathbb P[n_c=i, \mathrsfs C_{i-1}]\ge \eta_0\mathbb P[\mathrsfs C_{i-1}]
\end{eqnarray*}
for a constant $\eta_0>0.$
Therefore, 
\[
| \mathbb{P}[ n_{c} = i, \mathrsfs C_{i-1}] - \mathbb{P}[ \bar{n}_{c} =
i, \mathrsfs C_{i-1}] | \leq (4\varphi(h)/\eta_0)\mathbb{P}[ n_{c} = i , \mathrsfs C_{i-1}]\leq
(4\varphi(h)/\eta_0)\mathbb{P}[ n_{c} = i].
\]

 Lemma \ref{lem:estimate1} is proved by letting $c_0=4/\eta_0.$

\bigskip

\noindent{\it Proof of Lemma \ref{lem:estimate}:}
We only need to prove \eqref{1}, and \eqref{2} can be obtained  similarly. 
First, we estimate $\mathbb{P}[ d - h^{p - \delta} \leq |
 \mathcal{X}^{h}_{i-1} - \mathcal{Y}^{h}_{i-1} | \leq d].$
Conditioning on the value at the step $(i-2)$, we have 
\begin{align*}
  &\mathbb{P}[ d - h^{p - \delta} \leq |
  \mathcal{X}^{h}_{i-1} - \mathcal{Y}^{h}_{i-1} | \leq d] \\
=& \int_{\mathbb R^k \times \mathbb R^k} \mathbb{P}[d - h^{p - \delta} \leq |
   \mathcal{X}^{h}_{i-1} - \mathcal{Y}^{h}_{i-1} | \leq d \,|\,
    \mathcal{X}^{h}_{i-2}=\boldsymbol x ,  \mathcal{Y}^{h}_{i-2}=\boldsymbol y]
   \mu_{i-2}(d \boldsymbol x,d\boldsymbol  y),
\end{align*}
where $\mu_{i-2}(d\boldsymbol x,d\boldsymbol y) $ is the joint probability distribution of $(
\mathcal{X}^{h}_{i-2}, \mathcal{Y}^{h}_{i-2}) $.

By {\bf (S2)}, the
probability density function of $(\mathcal{X}^{h}_{i-1} -\mathcal{Y}^{h}_{i-1})$ conditional on $  \mathcal{X}^{h}_{i-2}=\boldsymbol x ,  \mathcal{Y}^{h}_{i-2}=\boldsymbol y$ is Gaussian-like.
So we have the following
   comparison of $\mathbb{P}[ d - h^{p - \delta} \leq |
   \mathcal{X}^{h}_{i-1} - \mathcal{Y}^{h}_{i-1} | \leq d \,|\,
   \mathcal{X}^{h}_{i-2} =\boldsymbol x,  \mathcal{Y}^{h}_{i-2} =\boldsymbol y]$ and $\mathbb{P}[ |
   \mathcal{X}^{h}_{i-1} - \mathcal{Y}^{h}_{i-1} | \leq d \,|\,
   \mathcal{X}^{h}_{i-2} =\boldsymbol x,  \mathcal{Y}^{h}_{i-2} = \boldsymbol y]$ as follows:

  (i) If $|\boldsymbol x -\boldsymbol  y| \le -\delta \log h\cdot h^{1/2}.$ Since $d =O(h^{1/2})$, within the set $\{ |\mathcal{X}^{h}_{i-1} -
   \mathcal{Y}^{h}_{i-1} | \leq d  \}$,   the maximal density of
   $(\mathcal{X}^{h}_{i-1} - \mathcal{Y}^{h}_{i-1})$ is at most
   $O(h^{-\delta})$ times the minimal density of
   $(\mathcal{X}^{h}_{i-1} - \mathcal{Y}^{h}_{i-1})$. In consideration that the volume of the shell $\{(\boldsymbol u,\boldsymbol w)\in\mathbb R^k: d - h^{p - \delta} \leq |\boldsymbol u-\boldsymbol w| \leq d  \}$ is
   $O(h^{(k-1)/2 + p - \delta}),$  
   we can find  a constant $c>0$ such that 
\begin{align*}
  &\mathbb{P}[ d - h^{p - \delta} \le |
   \mathcal{X}^{h}_{i-1} - \mathcal{Y}^{h}_{i-1} | \leq d \,|\,
   \mathcal{X}^{h}_{i-2} =\boldsymbol  x,  \mathcal{Y}^{h}_{i-2} = \boldsymbol  y] \\
\leq &c
   h^{k/2+p - \frac{1}{2} - 2\delta} \mathbb{P}[ |
   \mathcal{X}^{h}_{i-1} - \mathcal{Y}^{h}_{i-1} | \leq d \,|\,
   \mathcal{X}^{h}_{i-2} =\boldsymbol  x,  \mathcal{Y}^{h}_{i-2} = \boldsymbol y];
\end{align*}

(ii) If $|\boldsymbol x - \boldsymbol y| > -\delta \log h \cdot h^{1/2}$. Then the probability density of $(\mathcal{X}^{h}_{i-1} - \mathcal{Y}^{h}_{i-1})$ within the set  $\{ |\mathcal{X}^{h}_{i-1} -
\mathcal{Y}^{h}_{i-1} | \leq d  \}$ is less
   than  $ce^{-(\delta\log h)^{2}}h^{-k/2}$ (here, we still use $c>0$ as a uniform constant),  which converges to zero
   faster than $h^{r}$ for any $r>0$. 
   Hence,
\begin{align*}
   &\mathbb{P}[ d - h^{p - \delta} \le |
   \mathcal{X}^{h}_{i-1} - \mathcal{Y}^{h}_{i-1} | \leq d \,|\,
   \mathcal{X}^{h}_{i-2} =\boldsymbol x,  \mathcal{Y}^{h}_{i-2} = \boldsymbol y] \\
   \leq & 
   h^{p - \frac{1}{2} - \delta+r} \mathbb{P}[ |
   \mathcal{X}^{h}_{i-1} - \mathcal{Y}^{h}_{i-1} | \leq d \,|\,
   \mathcal{X}^{h}_{i-2} =\boldsymbol x,  \mathcal{Y}^{h}_{i-2} =\boldsymbol  y].
 \end{align*}
 Now, for both  cases, integrating over the initial conditions $(\boldsymbol x,\boldsymbol y)$, we have
\begin{eqnarray*}
&\mathbb{P}[d - h^{p - \delta} \leq |
\mathcal{X}^{h}_{i-1} - \mathcal{Y}^{h}_{i-1} | \leq d]
\le h^{p -
	\frac{1}{2}  - 2\delta}\mathbb{P}[|
\mathcal{X}^{h}_{i-1} - \mathcal{Y}^{h}_{i-1} | \leq d]. 
\end{eqnarray*}

Consequently,
\begin{align*}
  &\mathbb{P}[ n_{c} = i, \mathrsfs A^c_{i-1}] \\
 =&\mathbb{P}[ n_{c} = i| d - h^{p - \delta} \leq |
 \mathcal{X}^{h}_{i-1} - \mathcal{Y}^{h}_{i-1} | \leq d]\cdot\mathbb P[d - h^{p - \delta} \leq |
 \mathcal{X}^{h}_{i-1} - \mathcal{Y}^{h}_{i-1} | \leq d]\\
 \leq& h^{p -
 	\frac{1}{2}  - 2\delta}\cdot\mathbb{P}[ n_{c} = i|
 d-h^{p-\de}\le| \mathcal{X}^{h}_{i-1} - \mathcal{Y}^{h}_{i-1} | \leq d]\cdot \mathbb P[
 |\mathcal{X}^{h}_{i-1} - \mathcal{Y}^{h}_{i-1} | \leq d]
\end{align*}
As in the proof of Lemma \ref{lem:estimate1}, since the coupling probability conditioning on the event 
$\{|\mathcal{X}^{h}_{i-1} - \mathcal{Y}^{h}_{i-1} | \leq d\}$ is  uniform for all small $h>0,$ we have
\begin{eqnarray*}
\mathbb{P}[ n_{c} = i|
d-h^{p-\de}\le| \mathcal{X}^{h}_{i-1} - \mathcal{Y}^{h}_{i-1} | \leq d]\le \mathbb{P}[ n_{c} = i|\mathcal{X}^{h}_{i-1} - \mathcal{Y}^{h}_{i-1} | \leq d]/\eta_0,
\end{eqnarray*} 
where $\eta_0$ is as in the proof of Lemma \ref{lem:estimate1}. 
Thus, 
\[\mathbb{P}[ n_{c} = i, \mathrsfs A^c_{i-1}] \le (h^{p -
	\frac{1}{2}  - 2\delta}/\eta_0)\mathbb{P}[ n_{c} = i].\]
By setting $c_1=1/\eta_0$,  Lemma \ref{lem:estimate} is proved.

\subsection{Overdamped Langevin dynamics}
The first SDE example we shall use is the
overdamped Langevin dynamics. Consider 
\begin{equation}
\label{gradient} 
  \mathrm{d}X_{t} = - \nabla V(X_{t}) + \epsilon \mathrm{d}W_{t},
\end{equation}
where $V(x)$ is a potential function. It is well known that \eqref{gradient} admits a unique invariant probability measure
$\pi_{\epsilon}$ with the probability density
$$
  \rho_{\epsilon} = \frac{1}{K}e^{-2V(X)/\epsilon^{2}},
$$
where $K$ is a normalizer. In addition,  if $V$ is strictly convex such that
$\mathrm{Hess}(V) - R \mathrm{Id}_{k}$ is positive definite, then
$\pi_{\epsilon}$ satisfies the Logarithmic Sobolev inequality with
constant $\epsilon^{2} R/2$. Hence, the geometric convergence rate is
at least $R$. (We refer to \cite{bakry1985diffusions,lelievre2016partial} for details.) Now we
check our numerical result for the rate of geometric ergodicity with the above
analytical result. 

Consider $n = 2$ and $V(x,y) = (x^{2} + y^{2})/2$. This potential
function is strictly convex with Hessian matrix $\mathrm{Id}_{2}$. We run {\bf
  Algorithm 2} for different steps sizes $h = 0.0005, 0.001,
0.0015, 0.002, 0.0025$ and $0.003$. Throughout this section, the threshold of triggering the maximal coupling is set as $d = 2 \epsilon \sqrt{h}$ . The sample size $N =
10^{7}$. To reach the optimal coupling rate, we use the reflection coupling until the maximal coupling is triggered. Coupling time distributions versus different 
step sizes are compared in a log-linear plot (Figure \ref{fig5}
Left). The slopes of those exponential tails are computed by fitting
$\log \mathbb{P}[\tau_{c} > t]$ versus $t$ using a linear
function. We linearly extrapolate the negative slopes for decreasing $h$ in Figure \ref{fig5}
Right. We see that the numerical result for the rate of geometric ergodicity is very close to the theoretical one. In
addition, a smaller time step size gives a higher rate. By Theorem \ref{compare}, these numerically computed rates of
geometric ergodicity are trustable. 

\begin{figure}[htbp]
\centerline{\includegraphics[width = \linewidth]{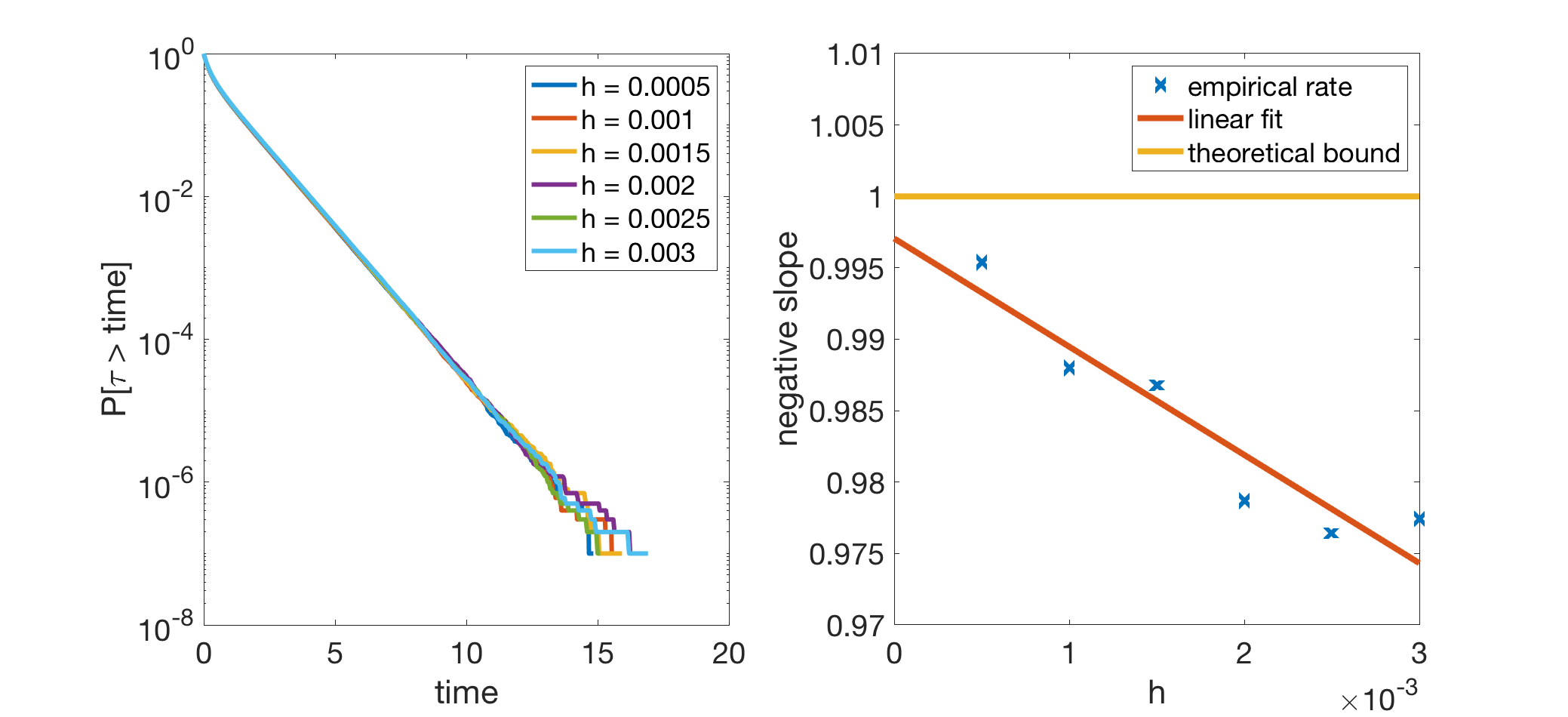}}
\caption{ Left: Coupling time distribution of the overdamped Langevin
  dynamics under different time step sizes. Right: Comparison of
  exponential tails of coupling time with different time step sizes.}
\label{fig5}
\end{figure}

\subsection{Van der Pol oscillator}
The second SDE example is the Van der Pol oscillator with additive
noise. We use this example to demonstrate the effect of slow-fast
dynamics on the geometric ergodicity. Consider
\begin{align}
  \label{VDP}
\mathrm{d}X_{t} &= (X_{t} - \frac{1}{3}X_{t}^{3} - Y_{t}) \mathrm{d}t +
  \epsilon \mathrm{d}W^{1}_{t}\\\nonumber
\mathrm{d}Y_{t} &= \frac{1}{\mu} X_{t} \mathrm{d}t + \epsilon \mathrm{d}W^{2}_{t}
\end{align}
The deterministic part of  \eqref{VDP} admits a limit cycle,
as shown in Figure \ref{fig6} Top Left. When $\mu \gg 1$, this system
demonstrates the slow-fast dynamics, which is called the
relaxation oscillation. The solution will move slowly along left/right
side of the limit cycle for a long time, and then jump to the other side
quickly after passing the ``folding point''. See Figure
\ref{fig6} Top Middle for $x$-trajectory versus time of the
deterministic equation.

The Van der Pol oscillator has been studied
for decades. We shall use our coupling methods to numerically study the
spectral property of \eqref{VDP}. The magnitude of noise is
chosen as $\epsilon = 0.3$, which is small compared with the size
of the limit cycle. We run {\bf Algorithm 2} with $N =
10^{7}$ samples and time step size is set as $h = 0.001$. Before the two trajectories are sufficiently close to each other, we
use a mixture of the independent and  reflection couplings. More
precisely, at each step,  with probability $\beta$ we use the
independent coupling,  and use the reflection coupling for
otherwise. This makes
the coupling process irreducible.  In the first simulation,  we fix $\mu
= 12$ and let $\beta = 0, 0.02, 0.04, 0.06,
0.08, 0.1$.  We find that the resultant
rate of the exponential tails decreases slightly as $\beta$ increases since  the
reflection coupling is more efficient than the independent coupling. However, this dependency is not very
sensitive; see Figure \ref{fig6} Top Right and Middle Left for more details. 

In the second simulation, we fix $\beta = 0.05$ and let $\mu = 2, 4, 6, 8, 10,
12$. The exponential tails of the  coupling time distribution  corresponding to the different $\mu$'s are
compared; see Figure \ref{fig6} Middle Right and Bottom Left. Note that the Middle Right figure is cut off at the probability $10^{-5}$ and horizontally stretched in order to demonstrate the
difference between $\mu = 10$ and $\mu = 12$ plots. The slopes of
these exponential tails versus different $\mu$'s are computed and plotted in Figure
\ref{fig6} Bottom Left.   

\begin{figure}[h]
\centerline{\includegraphics[width = 1.2\linewidth]{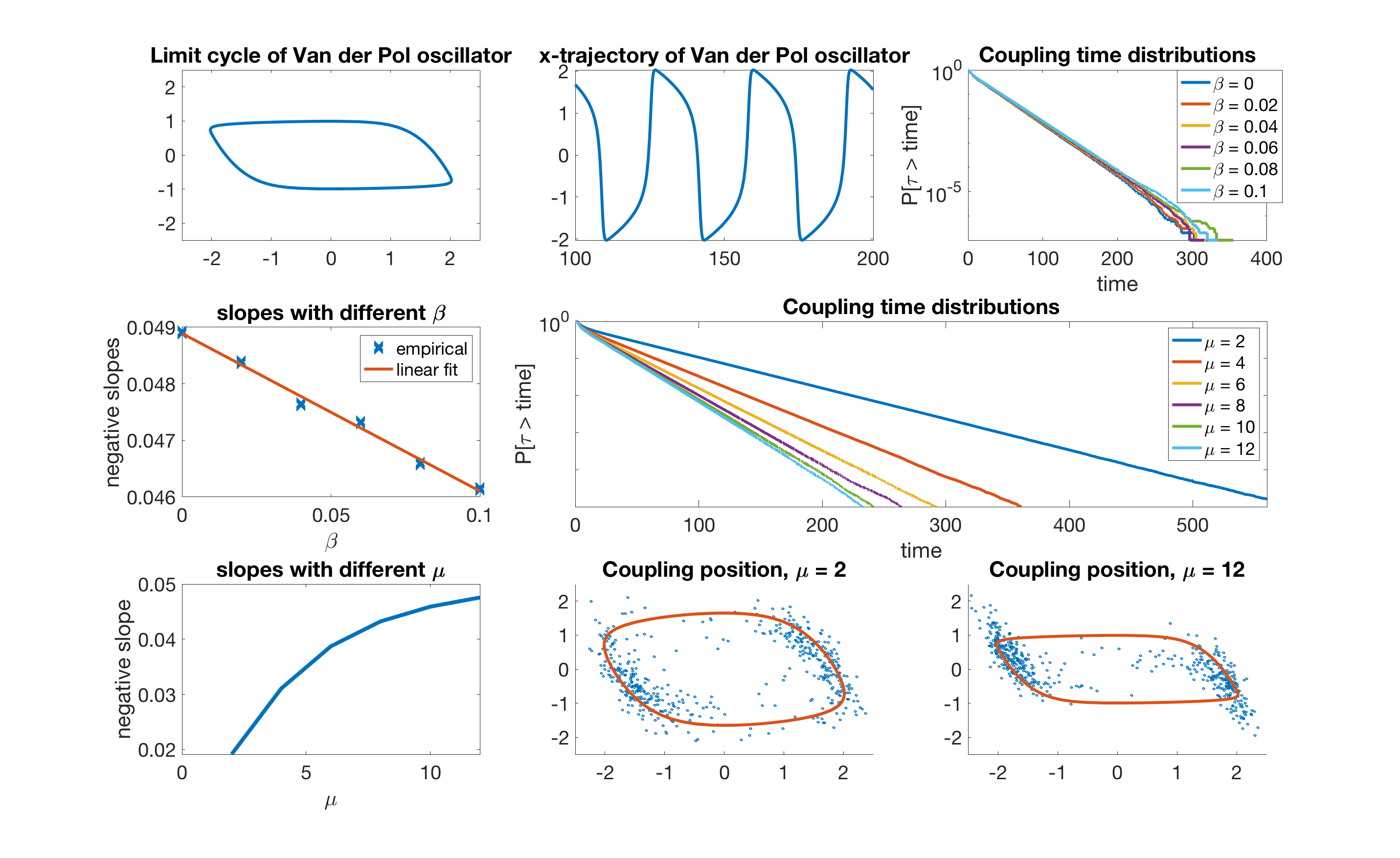}}
\caption{Top Left: limit cycle of the Van der Pol oscillator for $\mu
  = 12$. Top
  Mid: Deterministic trajectory of $x$-variable. Top Right:
  $\mathbb{P}[\tau_{c} > t]$ versus $t$ for different
  values of $\beta$ in log-linear plot. Mid Left: Linear fit of negative slopes of
  $\mathbb{P}[\tau_{c} > t]$ versus $t$ for different values of
  $\beta$. Mid Right:
  $\mathbb{P}[\tau_{c} > t]$ versus $t$ for different
  values of $\mu$  in log-linear plot. Bottom Left: Negative slopes of
  $\mathbb{P}[\tau_{c} > t]$ versus $t$ for different values of
  $\beta$. Bottom Mid: Positions where two trajectories couple when
  $\mu = 2$. Bottom Right: Positions where two trajectories couple when
  $\mu = 12$ }
\label{fig6}
\end{figure}

In this example, the rate of geometric ergodicity is small. This is expected because
one trajectory needs to diffuse along the limit cycle to ``chase'' the
other trajectory, which takes a considerable amount of time. An interesting observation is that the rate of geometric ergodicity
increases significantly with the increased time separation scale $\mu$. In other
words, a larger time-scaling separation of the slow-fast dynamics make the law of \eqref{VDP}
converge to its steady state distribution faster. To the best of our
knowledge, this interesting phenomenon is not documented in the previous
studies. We believe the reason is that a larger $\mu$ makes a trajectory move both slower near the slow manifold and closer to it, which
significantly increase the chance for two trajectories to ``meet''. This is confirmed numerically by Figure \ref{fig6} Bottom Middle and
Right. The positions of  $500$ samples  are plotted at which  they  are coupled
for $\mu = 2$ and $12$ respectively.  We see that the larger $\mu$ makes the
trajectories more likely to couple near the slow manifolds (the left
and right branches of the limit cycle and its extensions).

\subsection{SIR model with degenerate noise}
In this subsection, we  use an SIR model with degenerate noise to
demonstrate how our algorithm cam be adapted for SDEs with degenerate
diffusion terms. For degenerate diffusions, only one step of the numerical
algorithm does not produce a well-defined probability density function.  We need more than one step to implement the
maximal coupling. 

Consider an epidemic model in which the whole
population is divided into three distinct classes $S$ (susceptible
class), $I$ (infected class), and $R$ (recovered class), respectively. An SIR model
with the population growth is given by
\begin{align}\label{SIR}
\mathrm{d}S &= (\alpha - \beta SI - \mu S) \mathrm{d}t \\\nonumber
\mathrm{d}I &= (\beta SI - (\mu + \rho + \gamma) I ) \mathrm{d}t\\\nonumber
\mathrm{d}R &= (\gamma I - \mu R) \mathrm{d}t,
\end{align}
where $\alpha$ is the population birth rate, $\mu$ is the disease-free
death rate, $\rho$ is the excess death rate for the infected class,
$\gamma$ is the recover rate for the  infected population, and $\beta$ is
the effective contact rate between the susceptible class and infected
class \cite{dieu2016classification}. This model has been intensively
studied. We refer \cite{capasso1993mathematical, kermack1932contributions,  kermack1991contributions}
for a few representative references. 

Assume that all the three classes are driven by the same random
factor (such as temperature, humidity, etc.). This gives the SDE a  degenerate noise. Note that $S$ and $I$ in  \eqref{SIR} are independent of $R$. So we  consider the following SDE instead 
\begin{align}
  \label{SIrandom}
\mathrm{d}S &= (\alpha - \beta SI - \mu S) \mathrm{d}t + \sigma S \mathrm{d}W_{t}\\\nonumber
\mathrm{d}I &= (\beta SI - (\mu + \rho + \gamma) I ) \mathrm{d}t +
              \sigma I \mathrm{d}W_{t},
\end{align}
where $\sigma>0$ is the intensity of the white noise, and the two $\mathrm{d}W_{t}$ terms are from the same Brownian
motion. See Figure \ref{figSIR} Left for the trajectory in $\mathbb{R}^{2}_{+}$.

In \cite{dieu2016classification}, several results about the  asymptotic
behaviors of \eqref{SIrandom} are proved. Let 
$$
  \lambda = \frac{\alpha \beta}{\mu} - (\mu + \rho + \gamma -
  \frac{\sigma^{2}}{2}).
$$
If $\lambda > 0$, then  \eqref{SIrandom} admits a non-degenerate
invariant probability measure on $\mathbb{R}^{2}_{+}$. In
addition, it was shown that  
\eqref{SIrandom} approaches to its invariant probability measure faster than any
polynomial of $t$. This result is later improved in \cite{nguyen2020general}. In
this example, it is very challenging to construct an optimal Lyapunov
function to control the two different factors simultaneously. The Lyapunov function of
 \eqref{SIrandom} must take high values when $S$ and $I$ are
either too large or too small. A different approach is used in
\cite{nguyen2020general} to show the exponential ergodicity, but the resultant
rate is still not quantitative.

We use {\bf Algorithm 2} with an adaptation to the  degenerate noise (which
will be explained later) to examine the ergodicity of
\eqref{SIrandom}. The model parameters are set as $\alpha =7$, $\beta = 3$, $\mu = 1$, $\rho = 1$, $\gamma = 2$, and $\sigma =1$, the same as the example used in \cite{dieu2016classification}. Note that the reflection coupling cannot be applied due to the degeneracy of the noise.
In fact, for this set of parameters, 
the deterministic part of \eqref{SIrandom} converges to a unique equilibrium. With the same random noise being applied each time, any pair of stochastic trajectories of  \eqref{SIrandom} will converge to each other, just as its deterministic part does.  So in {\bf Algorithm 2},  we first use  the synchronous coupling to make  the two trajectories  sufficiently close. Then we implement a ``two-step version" of the maximal coupling to check whether the two trajectories can couple after every two steps. The numerical algorithm we use is still the Euler-Maruyama method  with the step size $h = 0.001$.  The total sample size is $N=10^{8}$. The coupling time distribution is demonstrated in Figure
  \ref{figSIR} Right. We can clearly see an exponential tail for
  $\mathbb{P}[ \tau_{c} > t]$. The linear fitting of $\log \mathbb{P}[
  \tau_{c} > t]$ versus $t$ gives a slope $\approx -
  0.53349$. Therefore, we conclude that \eqref{SIrandom} is indeed geometrically ergodic.

\begin{figure}[htbp]
\centerline{\includegraphics[width = \linewidth]{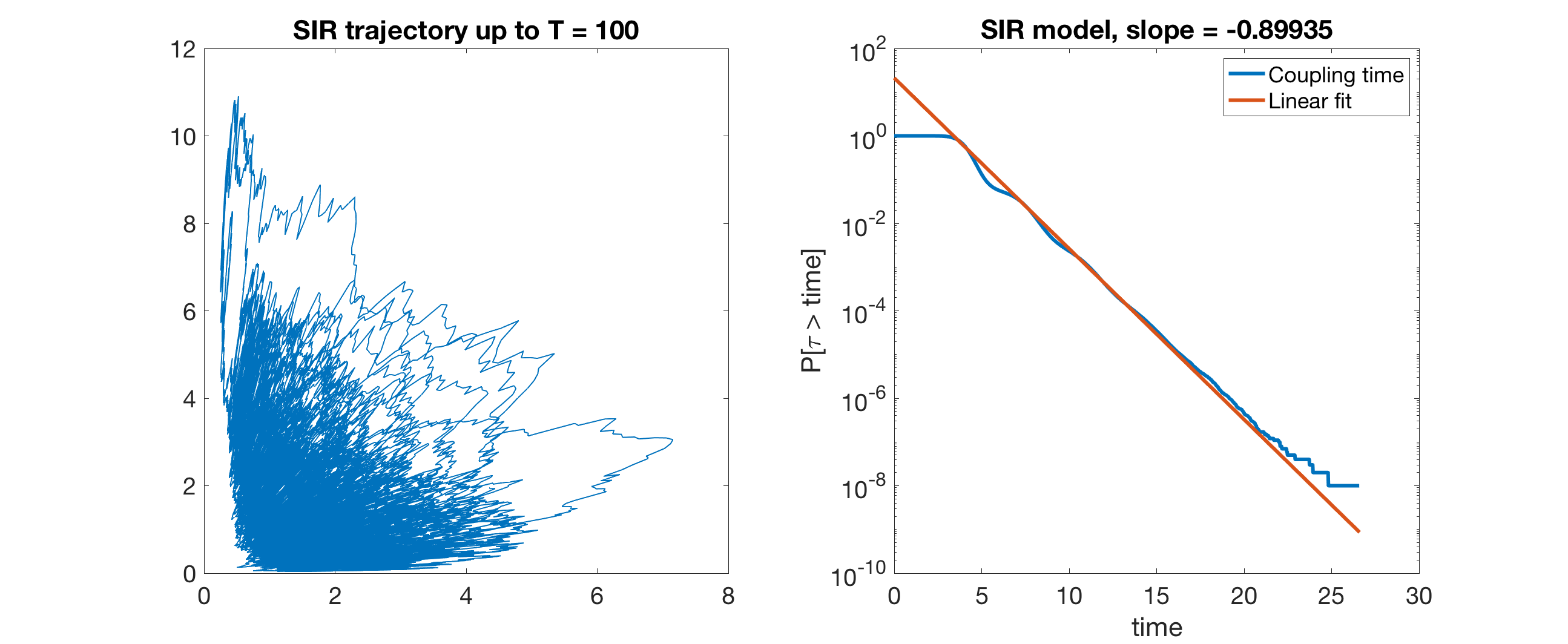}}
\caption{Left: Trajectory of equation \eqref{SIrandom} up to $T =
  100$. Right: Coupling time distribution $\mathbb{P}[ \tau_{c}
  > t]$ vs. $t$ in log-linear plot and linear function fitting. Parameters are $\alpha =
  7$, $\beta = 3$, $\mu = 1$, $\rho = 1$, $\gamma = 2$, and $\sigma =
  1$. }
\label{figSIR}
\end{figure}

\medskip

Now, we explain how to adapt {\bf Algorithm 3} for the degenerate diffusions. Since the one-step transition probability density function
of  \eqref{SIrandom} is degenerate, the density functions $p^{(x)}$ and
$p^{(y)}$ in {\bf Algorithm 3} are not well-defined. Instead, we need
to manually calculate the two-step transition probability density
function and then run the maximal coupling for two successive steps. Hence, the output
in {\bf Algorithm 3} should be $(\mathcal{X}_{n+2},
\mathcal{Y}_{n+2})$ and $\tau_{c}=n_ch$. For convenience, we  still use  $p^{(x)}$ and
$p^{(y)}$ to denote the respective probability density functions of
$\mathcal{X}_{n+2}$ and $\mathcal{Y}_{n+2}.$ In this way, the two-step version of {\bf
  Algorithm 3} is as follows: (i) Sample $\mathcal{X}_{n+2}$ and calculate $W
= U p^{(x)}( \mathcal{X}_{n+2})$; (ii) If $W \leq p^{(y)}(
\mathcal{X}_{n+2})$,  let $\mathcal{X}_{n+2} =\mathcal{Y}_{n+2}, \tau_{c} = (n+2)h.$  Otherwise,  sample $\mathcal{Y}_{n+2}$ and calculate $W'
= Vp^{(y)}( \mathcal{Y}_{n+2})$ until $W' > p^{(x)}(
\mathcal{Y}_{n+2})$. This method 
works for other similar problems with degenerate diffusions. If the noise is very degenerate, one may need to calculate the probability density function after more than two steps.

It is not easy to explicitly estimate the
probability density function of the  Euler-Maruyama method for two steps (or more). (One exception is  the Langevin dynamics because the derivative
of the position variable is a linear function of the velocity, which makes it possible to 
calculate an explicit probability density function;  see the
first author's another recent paper  \cite{dobson2019using}.) We need to use the
transformation of probability density functions to calculate  $p^{(x)}$ and
$p^{(y)}$ at different points. Our implementation is as below. 

Let $\bar{S}_{n}$ and $\bar{I}_{n}$ be the approximate values of $S^{h}_{n}$ and
$I^{h}_{n}$ 
when running the Euler-Maruyama method. After one step iteration, we have
\begin{align*}
  \bar{S}_{n+1} &= \bar{S}_{n} + (\alpha - \beta \bar{S}_{n}\bar{I}_{n} - \mu \bar{S}_{n})h + \sigma
            \bar{S}_{n} \sqrt{h} N_{1} := \widetilde{S}_{n+1}+ \sigma \bar{S}_{n}
            \sqrt{h}N_{1},\\
\bar{I}_{n+1} &= \bar{I}_{n} + (\beta \bar{S}_{n} \bar{I}_{n} - (\mu + \rho + \gamma) \bar{I}_{n})h +
          \sigma \bar{I}_{n}\sqrt{h}N_{1} :=  \widetilde{I}_{n+1} + \sigma \bar{I}_{n}
          \sqrt{h} N_{1},
\end{align*}
where $N_{1}$ is a standard normal random variable. After two steps, with some calculations  we have
\begin{align}
\label{2steps}
  \bar{S}_{n+2} &=  \widetilde{S}_{n+1} + (\alpha - \beta
             \widetilde{S}_{n+1} \widetilde{I}_{n+1} - \mu  \widetilde{S}_{n+1})h +
            R_{S}(N_{1}, N_{2})\\\nonumber
\bar{I}_{n+2} &=  \widetilde{I}_{n+1} + (\beta  \widetilde{S}_{n+1}  \widetilde{I}_{n+1} - (\mu +
          \rho + \gamma)  \widetilde{I}_{n+1})h + R_{I}(N_{1}, N_{2}),
\end{align}
where $N_{1}$, $N_{2}$ are two independent standard normal random
variables. The transformations $R_{S}$ and $R_{I}$ are as follows
\begin{align}
\label{eqnRS}
  R_{S}(N_{1}, N_{2}) &=  [- \beta \sigma \bar{S}_{n}h^{3/2} \widetilde{I}_{n+1} - \beta \sigma
  \bar{I}_{n}h^{3/2}  \widetilde{S}_{n+1} - \mu \sigma \bar{S}_{n}h^{3/2} + \sigma \bar{S}_{n}
  h^{1/2}] N_{1} \\\nonumber
&+ \sigma  \widetilde{S}_{n+1}h^{1/2} N_{2} - \beta
  \sigma^{2}\bar{S}_{n} \bar{I}_{n}h^{2}N_{1}^{2} + \sigma^{2}\bar{S}_{n}hN_{1}N_{2} 
\end{align}
and
\begin{align}
\label{eqnRI}
  R_{I}(N_{1}, N_{2}) &= [\beta \sigma \bar{S}_{n}h^{3/2}  \widetilde{I}_{n+1} + \beta \sigma
  \bar{I}_{n}h^{3/2}  \widetilde{S}_{n+1} - (\mu + \rho + \gamma) \sigma \bar{I}_{n}h^{3/2} \\\nonumber&+ \sigma \bar{I}_{n}
  h^{1/2}] N_{1} 
+ \sigma  \widetilde{I}_{n+1}h^{1/2} N_{2} - \beta
  \sigma^{2}\bar{S}_{n} \bar{I}_{n}h^{2}N_{1}^{2} + \sigma^{2} \bar{I}_{n}hN_{1}N_{2}.
\end{align}
For $h$ sufficiently small, the transformation $(N_{1}, N_{2}) \mapsto (R_{S},
R_{I})$ is close to a linear transformation since all the coefficients of quadratic terms 
are significantly smaller than that of the linear terms. Hence,
we treat this transformation as invertible when calculating the probability density function. 

By the elementary probability, it is easy to see that the joint
probability density function $p(R_{S}, R_{I})$ is given by 
\begin{equation}
\label{trans}
  p(R_{S}, R_{I}) = |J|^{-1} p^{norm}(\bar{N}_{1}, \bar{N}_{2}),
\end{equation}
where $J$ is the Jacobian matrix of the transformation $(N_{1}, N_{2})
\mapsto (R_{S}, R_{I})$, $p^{norm}$ is
the probability density function of the 2D standard normal random
variable, and $\bar{N}_{1}, \bar{N}_{2}$ are the values of random variables $N_{1}$ and
$N_{2}$ that produce $(R_{S}, R_{I})$. 

Now, let $\mathcal{X}^{h}_{n} = (\bar{S}^{x}_{n}, \bar{I}^{x}_{n})$ and $\mathcal{Y}^{h}_{n} =
(\bar{S}^{y}_{n}, \bar{I}^{y}_{n})$ be the two numerical trajectories that need to be
coupled. Let $p^{x}$ and $p^{y}$ be the probability density functions
of $\mathcal{X}^{h}_{n+2}$ and $\mathcal{Y}^{h}_{n+2},$
respectively. In {\bf Algorithm 3}, we need to compute four probability densities:
$p^{(x)}(\mathcal{X}^{h}_{n+2})$, $p^{(x)}( \mathcal{Y}^{h}_{n+2})$,
$p^{(y)}(\mathcal{X}^{h}_{n+2})$, and $p^{(y)}(
\mathcal{Y}^{h}_{n+2})$. Since  the normal random variables
$N_{1}$ and $N_{2}$ are already known when sampling $\mathcal{X}^{h}_{n+2}$,
$p^{(x)}(\mathcal{X}^{h}_{n+2})$ is given by  \eqref{trans}
directly. For $p^{(x)}( \mathcal{Y}^{h}_{n+2})$, we need to calculate
the ``effective'' $(R^{y}_{S}, R^{y}_{I})$ from 
\eqref{2steps} for $\mathcal{X}^{h}_{n+2}$, which are the ``effective random terms'' for $\mathcal{X}^{h}_{n+2}$ to produce
$\mathcal{Y}^{h}_{n+2}$. This is done by solving the following equations
\begin{align*}
  \bar{S}^{y}_{n+2} &=  \widetilde{S}^{x}_{n+1}+ (\alpha - \beta
             \widetilde{S}^{x}_{n+1} \widetilde{I}^{x}_{n+1}- \mu  \widetilde{S}^{x}_{n+1} )h +
            R^{y}_{S}(N^{y}_{1}, N^{y}_{2})\\\nonumber
\bar{I}^{y}_{n+2} &=  \widetilde{I}^{x}_{n+1} + (\beta \widetilde{S}^{x}_{n+1}  \widetilde{I}^{x}_{n+1}- (\mu +
          \rho + \gamma)  \widetilde{I}^{x}_{n+1})h + R^{y}_{I}(N^{y}_{1}, N^{y}_{2}).
\end{align*}
Then we solve $(N_{1}^{y}, N_{2}^{y})$ by numerically solving
equation \eqref{eqnRS} and \eqref{eqnRI} for $(R^{y}_{S},
R^{y}_{I})$. We use Newton's method which
converges after less than $5$ steps. This gives the ``effective normal random variables'' for
$\mathcal{X}^{h}_{n+2}$ to produce $\mathcal{Y}^{h}_{n+2}$. The
probability density function $p^{(x)}( \mathcal{Y}^{h}_{n+2})$ is obtained
by applying the transformation \eqref{trans} to the numerically solved
$(N_{1}^{y}, N_{2}^{y})$. Computations of $p^{(y)}(\mathcal{X}^{h}_{n+2})$ and $p^{(y)}(
\mathcal{Y}^{h}_{n+2})$ are analogous.

We remark that this is a representative example because many random
dynamical systems in various different settings admit random attractors
\cite{arnold1995random, debussche1997finite, schmallfuss1997random,
  wang2020long, wang2019asymptotic}. This means that any trajectory along 
the same Brownian sample path, denoted by $\omega$, will
converge to an $\omega$-dependent set $A(\omega)$. If $A(\omega)$
is a stable equilibrium, the synchronous coupling can bring any two trajectories
close to each other. It is also called reliability by some authors \cite{lin2009reliability}. When the two trajectories close enough, one can shift to the maximal coupling to make them collapse together. This approach builds 
some additional connections between the theories of random dynamical systems and stochastic differential equations.

\subsection{Coupled stochastic FizHugh-Nagumo model}
A significant  advantage of the coupling method used in this paper is that
it is relatively dimension-free. In contrast, approaches relying
on the discretization of the generator is extremely difficult when dealing
with higher dimensional problems. In this subsection, we consider a very
high dimensional example: the stochastic FizHugh-Nagumo(FHN) model, for which the many stochastically
FHN oscillators are coupled. It is well known that the FHN
model is a nonlinear model that models the periodic evolution of the membrane
potential of a spiking neuron under external stimulations. For a single
neuron, this model is a 2D dynamical system with additive noise
\begin{eqnarray}
\label{FHN1}
 \mu \mathrm{d}u& = &(u - \frac{1}{3}u^{3} - v) \mathrm{d}t +
                           \sqrt{\mu} \sigma \mathrm{d}W^{(1)}_{t}
  \\\nonumber
\mathrm{d} v &=& (u + a) \mathrm{d}t + \sigma \mathrm{d}W^{(2)}_{t},
\end{eqnarray}
where $u$ represents the membrane potential, $v$ is a recovery
variable, and $W^{(1)}_{t}, W^{(2)}_{t}$ are two independent Brownian motions. When $a = 1.05$, the deterministic system admits a stable
equilibrium with a small basin of attraction. Intermittent limit cycles
can be triggered by suitable random perturbations which are strong
enough to drive the system out from the basin of attraction.

Consider $50$ coupled equations \eqref{FHN1} with both the nearest-neighbor
interaction and a mean-field interaction. Similar as in \cite{chen2019spatial}, 
let $v = \sqrt{\mu}v$ be the new recovery variable. This gives the
coupled FHN oscillator
\begin{align}
\label{FHNc}
 \mathrm{d}u_{i}& = \left(\frac{1}{\mu} u_i - \frac{1}{3\mu} u_i^{3} -
                  \frac{1}{\sqrt{\mu}}v_i + \frac{d_{u}}{\mu} ( u_{i+1}
                      + u_{i-1} - 2 u_{i}) + \frac{w}{\mu}( \bar{u} - u_{i})\right ) \mathrm{d}t +
                           \frac{\sigma}{\sqrt{\mu}} \mathrm{d}W^{(2i-1)}_{t}
  \\\nonumber
\mathrm{d} v_{i} &=(\frac{1}{\sqrt{\mu}} u_i + \frac{a}{\sqrt{\mu}}) \mathrm{d}t + \frac{\sigma}{\sqrt{\mu}}\mathrm{d}W^{(2i)}_{t} \,
\end{align}
for $i = 1,\cdots, 50$, where $d_{u}$ is the neareast-neighbor coupling
strength, $w$ is the mean field coupling strength, $W^{(1)}_{t},
\cdots, W^{(100)}_{t}$ are independent Brownian motions, and
$$
  \bar{u} = \frac{1}{50} \sum_{i = 1}^{50} u_{i} 
$$
is the mean membrane potential. We set $u_{0} =
u_{50}$ and $u_{51} = u_{1}$ so that the $50$ neurons are
connected as a ring. We would like to use this example to demonstrate
the strength  of our algorithm when dealing with the high-dimensional problems. The
connection between the ergodicity and degree of synchrony will also be
discussed. 

In our simulations, we choose parameters $w = 0.4$, $\mu = 0.05$, and
$\sigma = 0.6$. These parameters are similar to those in
\cite{chen2019spatial}. The main control parameter is $d_u$. A higher $d_u$ means a stronger nearest-neighbor coupling, which gives a more synchronized
dynamics. See Figure \ref{FHNfig} Panel I-V for the time evolutions
of the membrane potential with different $d_{u}$. We  see that a higher  $d_u$ makes the membrane potentials of the $50$ neurons evolve more coherently. 

\begin{figure}[htbp]
\centerline{\includegraphics[width = 1.3\linewidth]{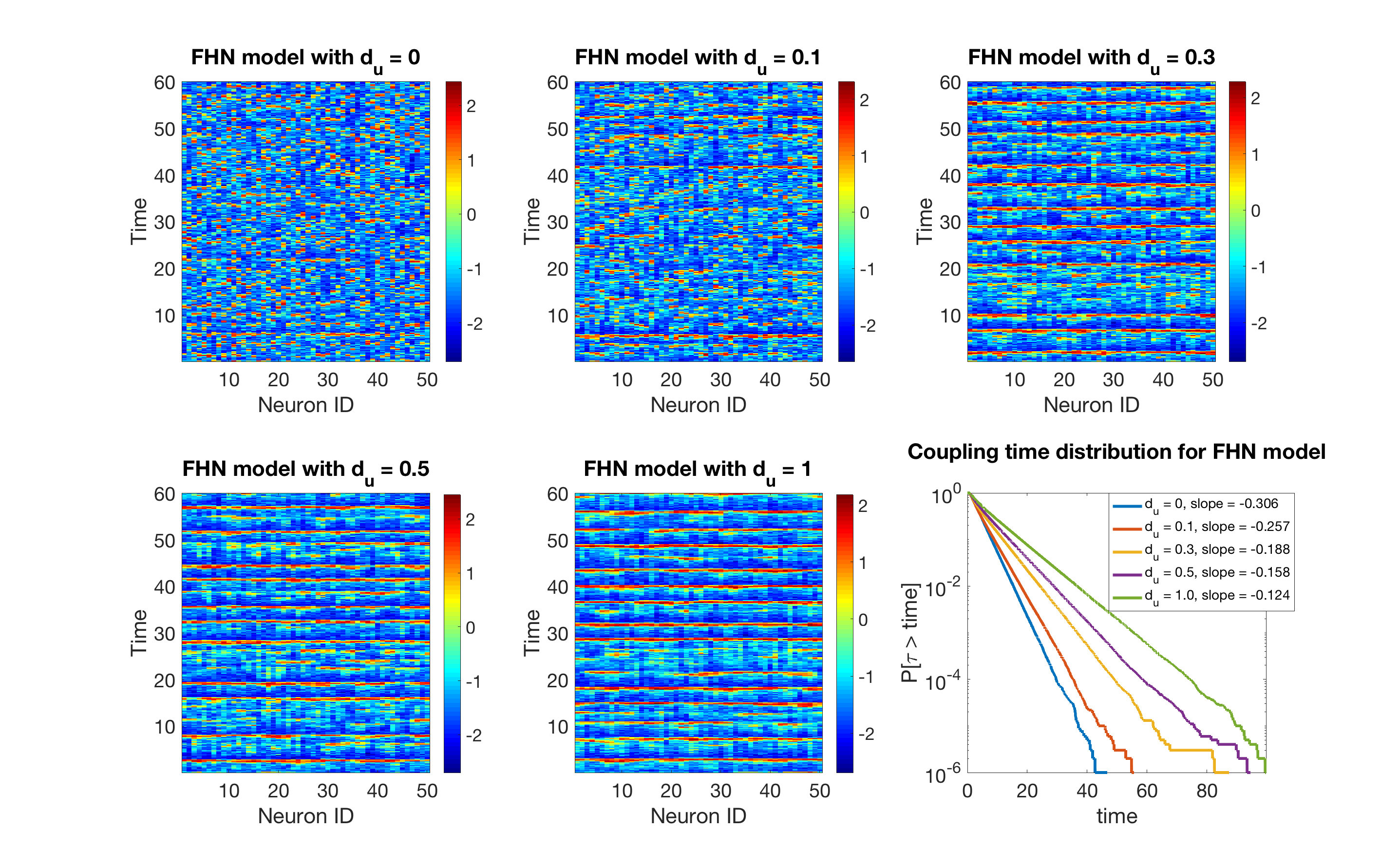}}
\caption{Panel I-V: Time evolutions of membrane potential of $50$
  coupled neurons in FHN model. Coupling strength $d_{u}$ takes value $0,
  0.1, 0.3, 0.5$, and $1$ in five figures. Different colors means different
  membrane potentials (see the color bar beside). X-axis: Neuron ID. Y-axis:
  Time. Panel VI: Coupling time distributions of FHN model with five different
  $d_u$ values in a log-linear plot.}
\label{FHNfig} 
\end{figure}

We use the Euler-Maruyama scheme  in our simulations  with the 
 step size $h = 0.001$. We run {\bf Algorithm 2} with $N=10^{6}$ samples for
$ d_u = 0, 0.1, 0.3, 0.5$, and $1$ to compute the slopes of exponential tails of
the distribution of coupling times. See Figure \ref{FHNfig} Panel VI
for a comparison of the coupling time distributions and slopes. We see that higher $d_{u}$'s provide longer
coupling times, and hence lower rates of  geometric ergodicity. Heuristically,
this phenomenon is caused by the phase lock. In the presence of strong
synchronization, the trajectories are attracted to the neighborhood of a
high dimensional limit cycle and follow it as time evolves. When
running the coupling process, the two independent trajectories can be
attracted to difference phases of this limit cycle. When this happens,
it  will take longer times for the two trajectories to couple, as one
trajectory needs to diffuse by itself to ``chase'' the other one along the limit cycle.

\section{Conclusion and further discussions}
The geometric ergodicity is an important property of a stochastic process
with an infinitesimal generator. It measures the mixing effect given by a
combination of the underlying deterministic dynamics and the random
perturbations. In this paper, based on the coupling
technique, we propose a probabilistic method to numerically compute the rate of
geometric ergodicity.   Some straightforward arguments show that the lower bound of the rate can be estimated by computing the exponential
tail of the coupling times. In addition, we find that the upper bound of the
geometric convergence rate can also be estimated by computing the first
exit time with respect to a sequence of disjoint sets pairs. Compared with the traditional method that looks for the eigenvalues of the discretized infinitesimal generator, our method is relatively dimension-free. It works well when the dimension of the
phase space becomes too high for the grid-based method to handle.

As numerical examples, we study several deterministic dynamical
systems with additive noise perturbations. One interesting finding is
that the coupling time distributions under  noise magnitudes can provide a lot of information about the deterministic
dynamics. As demonstrated in Section 4, the random perturbed systems admit different convergence rate versus noise curves when their
underlying deterministic dynamics admit different degrees of chaos. In other words, the coupling times provide some data-driven
inference of the underlying deterministic dynamics. Since the coupling method is
relatively dimension-free, we expect that this approach can be used to characterize some high-dimensional deterministic dynamical systems, such as the gradient
flows of high-dimensional potential functions. We plan to further
explore along this direction in future works. 

Despite the success of the many examples, the coupling method has its own limitations. Although there are some known results about coupling with degenerate noise, such as the coupling for the Langevin
dynamics \cite{eberle2019couplings} or the Hamiltonian Monte Carlo method \cite{bou2018coupling}. When the noise
is highly degenerate, it becomes difficult to design an effective
coupling scheme. In addition, with degenerate noise, the numerical maximal
coupling updates become significantly  difficult, as
one needs to compute the probability density function of several
consecutive updates in order to get a non-degenerate probability density
function. As shown in Section 5.4, even the implementation of a
relatively simple 2D example has some nontrivial overhead. At each step,
one needs to run a nonlinear equation solver twice to check the probability of coupling. In this
situation, a ``weaker'' approach based on the numerical return
time and analytical minorization condition works better; see the first author's
earlier paper \cite{li2017numerical}. The method in \cite{li2017numerical}
can numerically check the qualitative rate of ergodicity (geometric or
sub-geometric), although in general it does not give a useful bound for the
rate of geometric ergodicity. The first author is currently writing a
separate paper to extend the method in \cite{li2017numerical} to the case of
SDEs with highly degenerate noise terms.

\section*{Acknowledgement}
The authors would like to thank the referees for their valuable and  constructive comments which significantly improve the quality of this paper  in both presentation and substance.
Y. L. was partially supported by NSF DMS-1813246. S. W. was  partially supported by NSFC grants 11771026, 11471344, and acknowledges PIMS-CANSSI postdoctoral fellowship.

\bibliography{myref}
\bibliographystyle{amsplain}
\end{document}